\definecolor{rred}{rgb}{0.7,0.0,0.2}
\definecolor{bblue}{rgb}{0.2,0.0,0.7}
\newcommand{\secref}[1]{\ref{sec:#1}}
\newcommand{\seclab}[1]{\label{sec:#1}}
\newcommand{\eqlab}[1]{\label{eq:#1}}
\renewcommand{\eqref}[1]{(\ref{eq:#1})}
\newcommand{\eqsref}[2]{(\ref{eq:#1}) and~(\ref{eq:#2})}
\newcommand{\figref}[1]{Fig.~\ref{fig:#1}}
\newcommand{\figlab}[1]{\label{fig:#1}}
\newcommand{\propref}[1]{Proposition~\ref{proposition:#1}}
\newcommand{\proplab}[1]{\label{proposition:#1}}
\newcommand{\lemmaref}[1]{Lemma~\ref{lemma:#1}}
\newcommand{\lemmalab}[1]{\label{lemma:#1}}
\newcommand{\remref}[1]{Remark~\ref{remark:#1}}
\newcommand{\remlab}[1]{\label{remark:#1}}
\newcommand{\thmref}[1]{Theorem~\ref{theorem:#1}}
\newcommand{\thmlab}[1]{\label{theorem:#1}}
\title{On the regularization of impact without collision: the Painlev\'e paradox and compliance}
\author{S. J. Hogan and K. Uldall Kristiansen\thanks{S. J. Hogan: Department of Engineering Mathematics, University of Bristol, Bristol BS8 1UB, United Kingdom. K. Uldall Kristiansen: Department of Applied Mathematics and Computer Science, Technical University of Denmark, 2800 Kgs. Lyngby, DK. }}
\begin{document}
\maketitle

\begin{abstract}
We consider the problem of a rigid body, subject to a unilateral constraint, in the presence of Coulomb friction. We regularize the problem by assuming compliance (with both stiffness and damping) at the point of contact, for a general class of normal reaction forces. Using a rigorous mathematical approach, we recover impact without collision (IWC)  in both the inconsistent and indeterminate Painlev\'e paradoxes, in the latter case giving an exact formula for conditions that separate IWC and lift-off. We solve the problem for arbitrary values of the compliance damping and give explicit asymptotic expressions in the limiting cases of small and large damping, all for a large class of rigid bodies. 
\end{abstract}

\begin{keywords} 
Painlev\'e paradox, impact without collision, compliance, regularization
\end{keywords}

\pagestyle{myheadings}
\thispagestyle{plain}
\section{Introduction}\seclab{intro}
In mechanics, in problems with unilateral constraints in the presence of friction, the rigid body assumption can result in the governing equations having multiple solutions (the {\it indeterminate} case) or no solutions (the {\it inconsistent} case). The classical example of Painlev\'e \cite{Painleve1895, Painleve1905a,Painleve1905b}, consisting of a slender rod slipping\footnote{We prefer to avoid describing this phase of the motion as {\it sliding} because we will be using ideas from piecewise smooth systems \cite{filippov1988differential}, where sliding has exactly the opposite meaning.} along a rough surface (see  \figref{fig:rod}), is the simplest and most studied example of these phenomena, now known collectively as {\it Painlev\'e paradoxes} \cite{BlumenthalsBrogliatoBertails2016, Brogliato1999, ChampneysVarkonyi2016, ShenStronge2011, Stewart2000}. Such paradoxes can occur at physically realistic parameter values in many important engineering systems \cite{LeineBrogliatoNijmeijer2002,LiuZhaoChen2007,NeimarkFufayev1995, Or2014, OrRimon2012, WilmsCohen1981, ZhaoLiuMaChen2008}.

When a system has no {\it consistent} solution, it can not remain in that state. Lecornu \cite{Lecornu1905} proposed a jump in vertical velocity to escape an inconsistent, horizontal velocity, state. This jump has been called {\it impact without collision} (IWC) \cite{GenotBrogliato1999}, {\it tangential impact} \cite{Ivanov1986} or {\it dynamic jamming} \cite{OrRimon2012}. Experimental evidence of IWC is given in \cite{ZhaoLiuMaChen2008}. 

IWC occurs instantaneously. So it must be incorporated into the rigid body formulation \cite{Darboux1880, Keller1986} by considering the equations of motion in terms of the normal impulse, rather than time. However, this process has been controversial \cite{Brach1997, Stronge2015}, because it can sometimes lead to an apparent energy gain in the presence of friction. 

G\'enot and Brogliato \cite{GenotBrogliato1999} considered the dynamics around a critical point, corresponding to zero vertical acceleration of the end of the rod. They proved that, when starting in a consistent state, the rod must stop slipping before reaching the critical point. In particular, paradoxical situations cannot be reached after a period of slipping. 

One way to address the Painlev\'e paradox is to {\it regularize} the rigid body formalism. Physically this often corresponds to assuming some sort of compliance at the contact point $A$, typically thought of as a spring, with stiffness (and sometimes damping) that tend to the rigid body model in a suitable limit. Mathematically, very little rigorous work has been done on how IWC and Painlev\'e paradoxes can be regularized. Dupont and Yamajako \cite{DupontYamajako1997} treated the problem as a slow fast system, as we will do. They explored the fast time scale dynamics, which is unstable for the Painlev\'e paradoxes. Song {\it et al.} \cite{SongKrausKumarDupont2001} established conditions under which these dynamics can be stabilized. Le Suan An \cite{Lesuanan1990} considered a system with bilateral constraints and showed qualitatively the presence of a regularized IWC as a jump in vertical velocity from a compliance model with diverging stiffness. Zhao {\it et al.} \cite{ZhaoLiuChenBrogliato2015} considered the example in \figref{fig:rod} and regularized the equations by assuming a compliance that consisted of an {\it undamped} spring. They estimated, as a function of the stiffness, the orders of magnitude of the time taken in each phase of the (regularized) IWC.  Another type of regularization was considered by Neimark and Smirnova \cite{NeimarkSmirnova2001} who assumed that the normal and tangential reactions took (different) finite times to adjust. 

In this paper, we present the first rigorous analysis of the regularized rigid body formalism, in the presence of compliance with both stiffness {\it and} damping. We recover impact without collision (IWC)  in both the inconsistent and indeterminate cases and, in the latter case, we present a formula for conditions that separate IWC and lift-off.  We solve the problem for arbitrary values of the compliance damping and give explicit asymptotic expressions in the limiting cases of small and large damping. Our results apply directly to a general class of rigid bodies. Our approach is similar to that used in \cite{krihog,krihog2} to understand the forward problem in piecewise smooth (PWS) systems in the presence of a two-fold. 

The paper is organized as follows. In Section \secref{classic}, we introduce the problem, outline some of the main results known to date and include compliance. In Section \secref{mainResults}, we give a summary of our main results, \thmref{thm:main} and \thmref{cor}, before presenting their derivation in Sections \secref{blowup} and \secref{new}. We discuss our results in Section \secref{discussion} and outline our conclusions in Section \secref{conclusions}. 
\section{Classical Painlev\'e problem}\seclab{classic}
Consider a rigid rod $AB$, slipping on a rough horizontal surface, as depicted in \figref{fig:rod}. 

\begin{figure}[h!] 
\begin{center}
{\includegraphics[width=.5\textwidth]{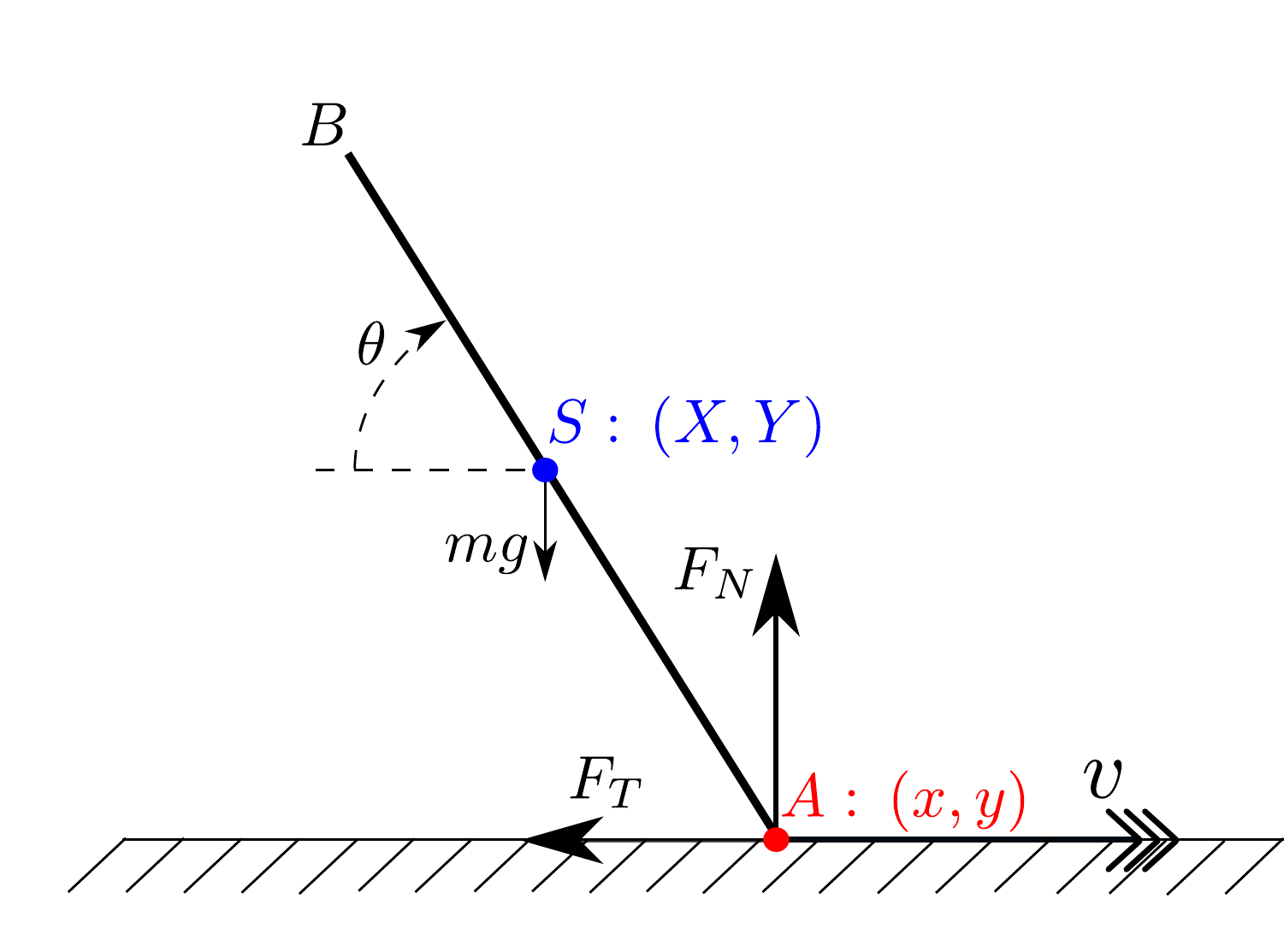}}
\end{center}
 \caption{
 The classical Painlev\'e problem.}
 \figlab{fig:rod}
\end{figure}
The rod has mass $m$, length $2l$, the moment of inertia of the rod about its center of mass $S$ is given by $I$ and its center of mass coincides with its center of gravity. The point $S$ has coordinates $(X,Y)$ relative to an inertial frame of reference $(x,y)$ fixed in the rough surface. The rod makes an angle $\theta$ with respect to the horizontal, with $\theta$ increasing in a clockwise direction. At $A$, the rod experiences a contact force $(F_T, F_N)$, which opposes the motion. The dynamics of the rod is then governed by the following equations
\begin{eqnarray}\eqlab{eq:dynrod}
m\ddot{X} & = & -F_T, \\ 
m\ddot{Y} & = & -mg - F_N, \nonumber \\
I\ddot{\theta} & = & -l(\cos\theta F_N-\sin\theta F_T). \nonumber
\end{eqnarray}
where $g$ is the acceleration due to gravity.

The coordinates $(X,Y)$ and $(x,y)$ are related geometrically as follows
\begin{eqnarray}\eqlab{eq:coord}
x & = & X + l\cos\theta, \\
y & = & Y - l\sin\theta. \nonumber
\end{eqnarray}

We now adopt the scalings $(X,Y)=l(\tilde X,\tilde Y), \, (x,y)=l(\tilde x,\tilde y), \, (F_T, F_N)=mg(\tilde F_T, \tilde F_N), \, t=\frac{1}{\omega}\tilde t, \, \alpha=\frac{ml^2}{I}$ where $\omega^2=\frac{g}{l}$. For a uniform rod, $I=\frac{1}{3}ml^2$, and so $\alpha=3$ in this case. 


Then for general $\alpha$, \eqsref{eq:dynrod}{eq:coord} can be combined to become, on dropping the tildes,
\begin{eqnarray}\eqlab{eq:dynrodsc}
\ddot{x} & = & -\dot{\theta}^2\cos \theta +\alpha\sin\theta\cos\theta F_N-(1+\alpha\sin^2\theta)F_T, \\ 
\ddot{y} & = & -1 +\dot{\theta}^2\sin\theta + (1+\alpha\cos^2\theta) F_N -\alpha\sin\theta\cos\theta F_T,  \nonumber \\
\ddot{\theta} & = & -\alpha(\cos\theta F_N-\sin\theta F_T). \nonumber
\end{eqnarray}
To proceed, we need to determine the relationship between $F_N$ and $F_T$. We assume Coulomb friction between the rod and the surface. Hence, when $\dot{x} \ne 0$, we set
\begin{equation}\eqlab{eq:Coulomb}
F_T = \mu \textnormal{sign}(\dot{x})F_N,
\end{equation}
where $\mu$ is the coefficient of friction. By substituting \eqref{eq:Coulomb} into \eqref{eq:dynrodsc}, we obtain two sets of governing equations for the motion, depending on the sign of $\dot{x}$, as follows:
\begin{eqnarray}\eqlab{eq:Painleve}
\dot{x} & = & v, \\
\dot{v} & = & a(\theta,\phi) + q_{\pm}(\theta)F_N, \nonumber \\
\dot{y} & = & w, \nonumber \\
\dot{w} & = & b(\theta,\phi) + p_{\pm}(\theta)F_N, \nonumber \\
\dot{\theta} & = & \phi, \nonumber \\
\dot{\phi} & = & c_{\pm}(\theta)F_N, \nonumber 
\end{eqnarray}
where the variables $v,w,\phi$ denote velocities in the $x,y,\theta$ directions respectively and
\begin{eqnarray}\eqlab{eq:coeffs}
a(\theta,\phi) & = & -\phi^2\cos \theta , \\
b(\theta,\phi) & = & -1 +\phi^2\sin\theta, \nonumber \\
q_{\pm}(\theta) & = & \alpha\sin\theta\cos\theta \mp \mu(1+\alpha\sin^2\theta), \nonumber \\
p_{\pm}(\theta) & = & 1+\alpha\cos^2\theta \mp \mu\alpha\sin\theta\cos\theta, \nonumber \\
c_{\pm}(\theta) & = & -\alpha(\cos\theta \mp \mu\sin\theta) \nonumber 
\end{eqnarray}
for the configuration in \figref{fig:rod}. The suffices $q_{\pm},p_{\pm},c_{\pm}$ correspond to $\dot{x} = v \gtrless 0$ respectively. 

System \eqref{eq:Painleve} is a Filippov system \cite{filippov1988differential}. Hence we obtain a well-defined forward flow when $\dot{x} = v = 0$ and
\begin{align}
a(\theta,\phi) + q_{+}(\theta)F_N<0<a(\theta,\phi) + q_{-}(\theta)F_N,\eqlab{filineq} 
\end{align}
where $\dot v$ in \eqref{eq:Painleve}$_{\pm}$ for $v\gtrless 0$ both oppose $v=0$, by using the Filippov vector-field \cite{filippov1988differential}. Simple computations give:
\begin{proposition}\proplab{lem:Filippov}
The Filippov vector-field, within the subset of the switching manifold $\dot{x} =v=0$ where \eqref{filineq} holds, is given by
 \begin{align}
 \dot{y}&=w,\eqlab{eq:Filippov}\\
 \dot w &= b(\theta,\phi)+S_w(\theta) F_N,\nonumber\\
 \dot \theta &=\phi,\nonumber\\
 \dot \phi &=S_\phi(\theta)F_N,\nonumber
 \end{align}
 where
 \begin{align}\eqlab{eq:Sw} 
  S_w(\theta) &= \frac{q_-(\theta) }{q_-(\theta)-q_+(\theta)}p_+(\theta)-\frac{q_+(\theta) }{q_-(\theta)-q_+(\theta)}p_-(\theta)=\frac{1+\alpha}{1+\alpha \sin^2 \theta},\\
  S_\phi(\theta) &= \frac{q_-(\theta) }{q_-(\theta)-q_+(\theta)}c_+(\theta)-\frac{q_+(\theta) }{q_-(\theta)-q_+(\theta)}c_-(\theta)\nonumber=-\frac{\alpha\cos\theta}{1+\alpha\sin^2\theta}\nonumber.
 \end{align}
\end{proposition}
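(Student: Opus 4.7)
The plan is to compute the Filippov sliding vector field on $\{v=0\}$ by taking the linear combination of the two branches of \eqref{eq:Painleve} with weights
\[
\lambda = \frac{q_-}{q_- - q_+}, \qquad 1-\lambda = -\frac{q_+}{q_- - q_+},
\]
namely the weights that make the coefficient of the normal reaction $F_N$ in $\lambda \dot v_+ + (1-\lambda)\dot v_-$ vanish. These are well-defined under \eqref{filineq}, which forces $q_- - q_+ \ne 0$. The equations $\dot y = w$ and $\dot\theta = \phi$ are inherited immediately since both branches share these velocity equations, and substitution into the remaining two components yields $\dot w = b(\theta,\phi) + [\lambda p_+ + (1-\lambda) p_-] F_N$ and $\dot\phi = [\lambda c_+ + (1-\lambda) c_-] F_N$, which are exactly $b + S_w F_N$ and $S_\phi F_N$ with $S_w, S_\phi$ as in \eqref{eq:Sw}.

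The remainder of the statement is the closed-form simplification of those rational expressions. The denominator $q_- - q_+ = 2\mu(1+\alpha\sin^2\theta)$ follows immediately from \eqref{eq:coeffs}. For the numerators I would exploit the common ``$\cdot\pm\mu(\cdot)$'' structure of $q_\pm, p_\pm, c_\pm$ by writing
\[
q_\pm = A \mp \mu B,\qquad p_\pm = C \mp \mu D,\qquad c_\pm = E \pm \mu F,
\]
with $A=D=\alpha\sin\theta\cos\theta$, $B=1+\alpha\sin^2\theta$, $C=1+\alpha\cos^2\theta$, $E=-\alpha\cos\theta$, $F=\alpha\sin\theta$. Expanding, the $\mu^0$ and $\mu^2$ contributions cancel pairwise, leaving the clean identities
\[
q_- p_+ - q_+ p_- = 2\mu(BC - AD),\qquad q_- c_+ - q_+ c_- = 2\mu(BE + AF).
\]

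The final step is then the Pythagorean simplification
\[
BC - AD = (1+\alpha\sin^2\theta)(1+\alpha\cos^2\theta) - \alpha^2\sin^2\theta\cos^2\theta = 1+\alpha,
\]
and analogously $BE + AF = -\alpha\cos\theta(1+\alpha\sin^2\theta) + \alpha^2\sin^2\theta\cos\theta = -\alpha\cos\theta$, both via $\sin^2\theta+\cos^2\theta = 1$. Dividing each numerator by $2\mu(1+\alpha\sin^2\theta)$ recovers the displayed closed forms for $S_w$ and $S_\phi$. The only real obstacle is bookkeeping the signs in the expansion; the symmetric $\pm\mu$ structure of $q_\pm,p_\pm,c_\pm$ is precisely what makes the non-cross contributions cancel, leaving expressions that collapse via the Pythagorean identity. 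No genuine difficulty arises beyond this careful algebra.
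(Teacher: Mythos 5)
Your algebra is fine: writing $q_\pm=A\mp\mu B$, $p_\pm=C\mp\mu A$, $c_\pm=E\pm\mu F$ with $A=\alpha\sin\theta\cos\theta$, $B=1+\alpha\sin^2\theta$, $C=1+\alpha\cos^2\theta$, $E=-\alpha\cos\theta$, $F=\alpha\sin\theta$, one indeed gets $q_--q_+=2\mu B$, $q_-p_+-q_+p_-=2\mu(BC-A^2)=2\mu(1+\alpha)$ and $q_-c_+-q_+c_-=2\mu(BE+AF)=-2\mu\alpha\cos\theta$, which gives the closed forms in \eqref{eq:Sw}. The paper gives no more detail than ``simple computations'', and for the coefficients of $F_N$ this is exactly the computation intended.

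There is, however, a genuine gap in how you justify the weights, and it is not merely cosmetic. Filippov's sliding vector field $\lambda X_+ + (1-\lambda)X_-$ is defined by requiring the \emph{entire} normal component to vanish, i.e.\ $\lambda\,\dot v_+ + (1-\lambda)\,\dot v_- = \lambda(a+q_+F_N)+(1-\lambda)(a+q_-F_N)=0$, not merely the coefficient of $F_N$. With your choice $\lambda=q_-/(q_--q_+)$ the combined field has $\dot v=a$, which is nonzero under the standing hypothesis $a\ne 0$ of Theorems \ref{theorem:thm:main} and \ref{theorem:cor}, so it is not tangent to $\{v=0\}$ and is not the Filippov field. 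The correct weight, $\lambda=(a+q_-F_N)/\bigl((q_--q_+)F_N\bigr)$, lies in $(0,1)$ precisely under \eqref{filineq}; it reproduces the same coefficients $S_w$, $S_\phi$ of $F_N$, but it also produces the additional drift terms $a(p_+-p_-)/(q_--q_+)=-a\alpha\sin\theta\cos\theta/(1+\alpha\sin^2\theta)$ in $\dot w$ and $a(c_+-c_-)/(q_--q_+)=a\alpha\sin\theta/(1+\alpha\sin^2\theta)$ in $\dot\phi$. (Equivalently: solving $\ddot x=0$ for the static friction force gives $F_T=(a+AF_N)/B$, and substituting back into \eqref{eq:dynrodsc} yields these same $a$-dependent terms.) These $\mathcal O(1)$ terms do not appear in \eqref{eq:Filippov} as printed; they are immaterial for the paper's later use of the proposition, where $F_N=\mathcal O(\epsilon^{-1})$ dominates and the $\mathcal O(1)$ drift is dropped along with $b$ at $\epsilon=0$, but your derivation reaches the stated formula only by imposing the wrong tangency condition. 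You should state Filippov's condition correctly and then either carry the extra $a$-terms explicitly or justify why they may be discarded in the regime of interest.
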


\begin{remark}
Our results hold for mechanical systems with different $q_\pm$, $p_\pm$ and $c_\pm$  in \eqref{eq:coeffs} and even dependency on several angles $\theta\in \mathbb T^d$, e.g. the two-link mechanism of Zhao {\it et al.} \cite{ZhaoLiuMaChen2008}. Note that both $S_w$ and $S_{\phi}$ in \eqref{eq:Sw} are independent of $\mu$, even for general $c_\pm$, $q_\pm$ and $p_\pm$. 
\end{remark}


In order to solve \eqsref{eq:Painleve}{eq:Filippov}, we need to determine $F_N$.  The constraint-based method leads to the Painlev\'e paradox. The compliance-based method is the subject of this paper. 

\subsection{Constraint-based method}
In order that the constraint $y=0$ be maintained, $\ddot{y} \enskip ( = \dot{w})$ and $F_N$ form a complementarity pair given by
\begin{equation}\eqlab{eq:comppair}
\dot{w} \ge 0, \quad F_N \ge 0, \quad F_N \cdot \dot{w} =0.
\end{equation}
Note that $F_N\ge 0$ since the rough surface can only push, not pull, the rod. Then for general motion of the rod, $F_N$ and $y$ satisfy the complementarity conditions 
\begin{equation}\eqlab{eq:comprel}
0 \le F_N \perp y \ge 0.
\end{equation}
In other words, at most one of $F_N$ and $y$ can be positive. 

For the system shown in \figref{fig:rod}, the Painlev\'e paradox occurs when $v>0$ and $\theta \in (0,\frac{\pi}{2})$, provided $p_+(\theta)<0$, as follows: From the fourth equation in \eqref{eq:Painleve}, we can see that $b$ is the free acceleration of the end of the rod. Therefore if $b>0$, lift-off is always possible when $y=0,\,w=0$. But if $b<0$, in equilibrium we would expect a forcing term $F_N$ to maintain the rod on $y=0$. From $\dot w=0$ we obtain
 \begin{align}
  F_N = -\frac{b}{p_+}\eqlab{eq:kk}
 \end{align}
since $v>0$. If $p_+>0$, which is always true for $\theta \in (\frac{\pi}{2},\pi)$, then $F_N\ge 0$, in line with \eqref{eq:comprel}. But if $p_+<0$, which can happen if $\theta \in (0,\frac{\pi}{2})$, then $F_N<0$ in \eqref{eq:kk}. Then $F_N$ is in an {\it inconsistent} (or {\it non-existent}) mode. On the other hand, if $b>0$ and $p_+<0$ then $F_N>0$ in \eqref{eq:kk}. At the same time lift-off is also possible from $y=0$ and hence $F_N$ is in an {\it indeterminate} (or {\it non-unique}) mode. It is straightforward to show that $p_+(\theta)<0$ requires
\begin{equation}\eqlab{eg:mucrit}
\mu > \mu_P (\alpha) \equiv \frac{2}{\alpha}\sqrt{1+\alpha}.
\end{equation}
Then the Painlev\'e paradox can occur for $\theta \in (\theta_{1},\theta_{2})$ where
\begin{eqnarray}\eqlab{eq:thetacrit}
\theta_{1}(\mu,\alpha) & = & \arctan \frac{1}{2}\left ( \mu\alpha - \sqrt{\mu^2\alpha^2-4(1+\alpha)} \right ), \\
\theta_{2}(\mu,\alpha) & = & \arctan \frac{1}{2}\left ( \mu\alpha + \sqrt{\mu^2\alpha^2-4(1+\alpha)} \right ). \nonumber
\end{eqnarray}

For a uniform rod with $\alpha=3$, we have $\mu_P(3) = \frac{4}{3}$. For $\alpha=3$ and $\mu=1.4$ the dynamics can be summarized\footnote{Compare with Figure 2 of G\'enot and Brogliato \cite{GenotBrogliato1999}, where the authors plot the {\it unscaled} angular velocity $\omega\phi$ {\it vs.} $\theta$, for the case $g=9.8$ $\textnormal{ms}^{-2}$, $l=1$ m.} in the $(\theta, \phi)$-plane, as in \figref{fig:GB}. 
\begin{figure}[h!] 
\begin{center}
{\includegraphics[width=.7\textwidth]{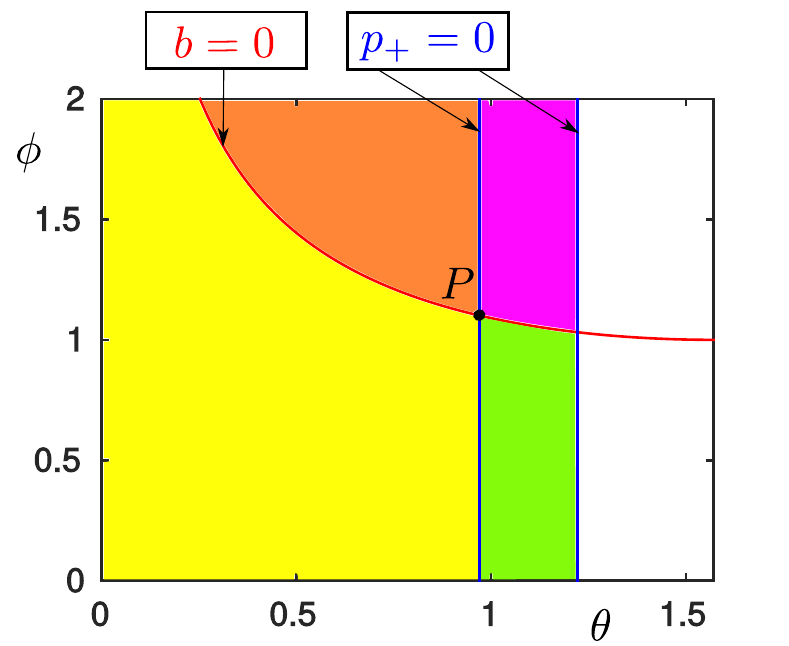}}
\end{center}
 \caption{The $(\theta, \phi)$-plane for the classical Painlev\'e problem of \figref{fig:rod}, for $\alpha=3$ and $\mu=1.4$. The point $P$ has coordinates $(\theta_1, \sqrt{\csc\theta_1})$, where $\theta_1$ is given in \eqref{eq:thetacrit}. In the first quadrant centered on $P$, we have $b >0, \enskip p_+<0$, so the dynamics is indeterminate (non-unique). In the second quadrant, $b >0, \enskip p_+>0$ and the rod lifts off the rough surface. In the third quadrant, $b <0, \enskip p_+>0$ and the rod moves (slips) along the surface. Here G\'enot and Brogliato \cite{GenotBrogliato1999} showed that the dynamics cannot cross $p_+=0$ unless also $b=0$. In the fourth quadrant, $b <0, \enskip p_+<0$ and the dynamics is inconsistent (non-existent). Even though the constraint $y=0$ is satisfied, there exists no positive value of $F_N$, contradicting \eqref{eq:comprel}.}
 \figlab{fig:GB}
\end{figure}
Along $\theta=\theta_{1},\theta_{2}$, we have $p_+(\theta)=0$. These lines intersect the curve $b(\theta,\phi)=0$ at four points: $\phi^{\pm}_{1,2} = \pm \sqrt{\csc\theta_{1,2}}$. G\'enot and Brogliato \cite{GenotBrogliato1999} showed that the point $P: (\theta,\phi) = (\theta_1, \sqrt{\csc\theta_1})$ is the most important and analyzed the local dynamics around it. The rigid body equations \eqref{eq:dynrod} are unable to resolve the dynamics in the third and fourth quadrants. So we regularize these equations using compliance. 

\subsection{Compliance-based method}\seclab{sec:compliancebasedmethod}
We assume that there is compliance at the point $A$ between the rod and the surface, when they are in contact (see \figref{fig:rod}). Following \cite{DupontYamajako1997,McClamroch1989}, we assume that there are small excursions into $y<0$. Then we require that the nonnegative normal force $F_N(y,w)$ is a PWS function of $(y,w)$:  
\begin{eqnarray}\eqlab{eq:compliance}
F_N(y,w)  =\left[f(y,w)\right]\equiv \left\{ \begin{array}{cc}
                    0 & \text{for} \quad  y>0\\
                    \max \{f(y,w), 0 \} & \text{for} \quad y \le 0,
                    \end{array}\right.
\end{eqnarray}
where the operation $\left[\cdot\right]$ is defined by the last equality and $f(y,w)$ is assumed to be a smooth function of $(y,w)$ satisfying
$\partial_y f<0,  \partial_w f<0.$ The quantities $\partial_y f(0,0)$ and $\partial_w f(0,0)$ represent a (scaled) spring constant and damping coefficient, respectively.
We are interested in the case when the compliance is very large, so we introduce a small parameter $\epsilon$ as follows:
\begin{equation}\eqlab{eq:epsilon}
\partial_y f(0,0)=\epsilon^{-2}, \quad  \partial_w f(0,0) = \epsilon^{-1}\delta.
\end{equation}
This choice of scaling \cite{DupontYamajako1997, McClamroch1989} ensures that the critical damping coefficient ($\delta_{crit}=2$ in the classical Painlev\'e problem) is independent of $\epsilon$. Our analysis can handle any $f$ of the form
$f(y,w) = \epsilon^{-1} h(\epsilon^{-1} y,w)$ with
\begin{align}
 h(\hat y,w) = -\hat y-\delta w+\mathcal O((\hat y+w)^2).\eqlab{hNon}
\end{align}
 But, to obtain our quantitative results, we truncate \eqref{hNon} and consider the linear function
 \begin{align}\eqlab{hLin}
 h(\hat y,w) = -\hat y-\delta w,
 \end{align}
 so that
 \begin{align}\eqlab{eq:FN}
  F_N(y,w) = \epsilon^{-1}\left[-\epsilon^{-1} y-\delta w\right].
 \end{align}

In what follows, the first equation in \eqref{eq:Painleve} will play no role, so we drop it from now on. Then we combine the remaining five equations in \eqref{eq:Painleve} with \eqsref{eq:compliance}{eq:epsilon} to give the following set of governing equations that we will use in the sequel 
\begin{eqnarray}\eqlab{eq:PainleveEqs}
\dot{y} & = & w, \\
\dot{w} & = & b(\theta,\phi) + p_{\pm}(\theta)\epsilon^{-1}[-\epsilon^{-1}y-\delta w], \nonumber \\
\dot{\theta} & = & \phi, \nonumber \\
\dot{\phi} & = & c_{\pm}(\theta)\epsilon^{-1}[-\epsilon^{-1}y-\delta w], \nonumber \\
\dot{v} & = & a(\theta,\phi) + q_{\pm}(\theta)\epsilon^{-1}[-\epsilon^{-1}y-\delta w], \nonumber
\end{eqnarray}

For $\epsilon>0$ this is a well-defined Filippov system.
The slipping region \eqref{filineq} and the Filippov vector-field \eqref{eq:Filippov} are obtained by replacing $F_N$ in these expressions with the square bracket $\epsilon^{-1}[-\epsilon^{-1} y-\delta w]$ (see also \lemmaref{lemma:nonStick} below). 

\section{Main Results}\seclab{mainResults}
We now present the main results of our paper, \thmref{thm:main} and \thmref{cor}. \thmref{thm:main} shows that, if the rod starts in the fourth quadrant of \figref{fig:GB}, it undergoes a (regularized) IWC for a time of $\mathcal O(\epsilon \ln \epsilon^{-1})$. The same theorem also gives expressions for the resulting vertical velocity of the rod in terms of the compliance damping and initial horizontal velocity and orientation of the rod.  
\begin{theorem}\thmlab{thm:main}
 Consider an initial condition 
 \begin{align}
  (y,w,\theta, \phi,v) = (0,\mathcal O(\epsilon),\theta_0,\phi_0,v_0),\quad v_0>0,\eqlab{eq:ics}
 \end{align}
within the region of inconsistency (non-existence) where
\begin{align}
 p_+(\theta_0)<0,\quad b(\theta_0,\phi_0)<0,\eqlab{eq:nonExistence}
\end{align}
and $q_+(\theta_0)<0$, $q_-(\theta_0)>0$, $a\ne 0$.
Then the forward flow of \eqref{eq:ics} under \eqref{eq:PainleveEqs} returns to $\{(y,w,\theta, \phi,v)\vert y=0\}$ after a time $\mathcal O(\epsilon \ln \epsilon^{-1})$ with
\begin{align}
   w &= e(\delta,\theta_0)v_0+o(1),\eqlab{expr}\\
   \theta &=\theta_0+o(1),\nonumber\\
    \phi &=\phi_0+\left\{-\frac{c_+(\theta_0)}{q_+(\theta_0)}+\frac{S_\phi(\theta_0)}{S_w(\theta_0)}\left(e(\delta,\theta_0)+\frac{p_+(\theta_0)}{q_+(\theta_0)}\right)\right\}v_0+o(1),\nonumber\\
    v&=o(1),\nonumber
  \end{align}
  as $\epsilon\rightarrow 0$. During this time $y=\mathcal O(\epsilon),\,w=\mathcal O(1)$ so that $F_N=\mathcal O(\epsilon^{-1})$. The function $e(\delta,\theta_0)$, given in \eqref{eq:eGeneral} below, is smooth and monotonic in $\delta$ and has the following asymptotic expansions:
 \begin{align}
  e(\delta,\theta_0 ) &=  \frac{p_-(\theta_0)-p_+(\theta_0)}{ q_-(\theta_0)p_+(\theta_0)-q_+(\theta_0) p_-(\theta_0)}\delta^{-2}\left(1+\mathcal O(\delta^{-2} \ln \delta^{-1})\right) \quad \text{for}\quad \delta \gg 1,\eqlab{eDeltaLarge}\\                              
  e(\delta,\theta_0 ) &=  \sqrt{\frac{p_+(\theta_0)(p_-(\theta_0)-p_+(\theta_0))}{q_+(\theta_0)(q_-(\theta_0)-q_+(\theta_0))}} \Bigg(1-\nonumber\\                               
  &\frac{\sqrt{S_w(\theta_0)}}{2}\left(\pi-\textnormal{arctan}\left(\sqrt{-\frac{S_w(\theta_0)}{p_+(\theta_0)}}\right) \right)\delta+\mathcal O(\delta^2)\Bigg) \quad \text{for}\quad \delta \ll 1.       \eqlab{eDeltaSmall}                        
 \end{align}

\end{theorem}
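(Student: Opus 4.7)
My plan is to treat \eqref{eq:PainleveEqs} as a singularly perturbed slow--fast system via the blow-up $\hat y = y/\epsilon$ together with the rescaling $\tau = t/\epsilon$. In this fast time, the leading-order ($\epsilon = 0$) dynamics freezes $\theta$ at $\theta_0$ and reduces to a piecewise-linear planar problem in $(\hat y, w)$, coupled to the scalar variables $(v,\phi)$ only through the same rectified spring force $\Omega := [-\hat y - \delta w]$ that also drives $w$. This yields affine first integrals: in the $v > 0$ branch, $v - v_0 = (q_+/p_+)(w - w_0)$ and $\phi - \phi_0 = (c_+/p_+)(w - w_0)$; and, on the Filippov sliding set, $\phi - \phi_1 = (S_\phi/S_w)(w - w_1)$ from \propref{lem:Filippov}. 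Consequently, computing the exit state reduces to finding a single exit value of $w$, and the formula for $\phi$ in \eqref{expr} then follows by direct substitution once that value is known.

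I would split the trajectory into two phases. In Phase~1 ($v > 0$, Coulomb-$+$ branch), the fast planar subsystem is the linear ODE $\hat y'' + p_+\delta\,\hat y' + p_+ \hat y = 0$. With $p_+ < 0$ the origin is a hyperbolic saddle whose unstable eigenvalue $\lambda_+^{(1)} = \tfrac12(-p_+\delta + \sqrt{p_+^2\delta^2 - 4p_+}) > 0$ dictates the escape time: the initial datum $(\hat y,w) = (0,\mathcal{O}(\epsilon))$ lies $\mathcal{O}(\epsilon)$-close to the stable manifold, and a standard inclination-lemma argument gives $\tau_1 = (\lambda_+^{(1)})^{-1}\ln\epsilon^{-1} + \mathcal{O}(1)$, producing the $\mathcal{O}(\epsilon\ln\epsilon^{-1})$ scaling in physical time. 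At exit the solution is asymptotic to the unstable eigendirection $w = \lambda_+^{(1)}\hat y$, while the affine integral drives $v$ monotonically to zero, fixing $w_1 = -(p_+/q_+)v_0$ and $\hat y_1 = w_1/\lambda_+^{(1)}$. The hypotheses $q_+ < 0 < q_-$ force entry into the Filippov sliding region of $v = 0$, so Phase~2 ensues, with $(\hat y, w)$ now governed by the genuinely damped oscillator $\hat y'' + S_w\delta\,\hat y' + S_w\hat y = 0$, $S_w > 0$, from \eqref{eq:Sw}. Solving this linear system explicitly, the return to $\hat y = 0$ occurs either directly (with compliance still engaged) or, when damping is strong, after a brief free-flight phase initiated when the rectified force switches off at $\hat y + \delta w = 0$; in both cases the corresponding exit value of $w$, viewed as a function of $(p_\pm, q_\pm, S_w, \delta, v_0)$, defines $e(\delta,\theta_0)\,v_0$.

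The two asymptotic expansions arise from distinguished limits of this explicit formula. For $\delta \ll 1$ the Phase~2 oscillator degenerates at $\delta = 0$ to the undamped equation, which conserves $\tfrac12(w^2 + S_w\hat y^2)$; combined with $\hat y_1 = w_1/\sqrt{-p_+}$ and the algebraic identity $S_w - p_+ = -q_+(p_- - p_+)/(q_- - q_+)$, this yields $e(0,\theta_0) = \sqrt{p_+(p_- - p_+)/(q_+(q_- - q_+))}$, which is the leading constant in \eqref{eDeltaSmall}; the $\mathcal{O}(\delta)$ correction comes from the energy dissipated across the (nearly) half-period traversed, together with the location of the $\hat y + \delta w = 0$ crossing, and it is this that generates the $\arctan$ term. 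For $\delta \gg 1$ the expansions $\lambda_+^{(2)} = -\delta^{-1} + \mathcal{O}(\delta^{-3})$, $\lambda_-^{(2)} = -S_w\delta + \mathcal{O}(\delta^{-1})$, and the critical cancellation $1 + \delta\lambda_+^{(2)} = -(S_w\delta^2)^{-1} + \mathcal{O}(\delta^{-4})$ show that after a short fast transient the trajectory crosses $\hat y + \delta w = 0$ at $w \approx v_0(p_- - p_+)/(q_- p_+ - q_+ p_-)\,\delta^{-2}$, matching \eqref{eDeltaLarge}. The main technical obstacle is closing the singular-perturbation bookkeeping: normal hyperbolicity fails at the Phase~1 saddle, so I would rely on a blow-up plus exchange-lemma argument of the type used in \cite{krihog,krihog2} to transport the $\mathcal{O}(\epsilon)$ data across the saddle while simultaneously controlling both piecewise-smooth switches (along $v = 0$ at entry to Filippov, and along $\hat y + \delta w = 0$ at disengagement of compliance), so that the $o(1)$ remainders in \eqref{expr} are justified uniformly as $\epsilon \to 0$.
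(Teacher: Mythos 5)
Your proposal is correct and follows essentially the same route as the paper: the scaling $\hat y=y/\epsilon$ with fast time $\tau=t/\epsilon$, the three-phase decomposition (escape along the unstable manifold of the frozen saddle $\hat y''+p_+\delta\hat y'+p_+\hat y=0$ until $v=0$, Filippov sticking governed by $\hat y''+S_w\delta\hat y'+S_w\hat y=0$ up to the disengagement line $\hat y+\delta w=0$, then free flight back to $\hat y=0$), the affine transport of $v$ and $\phi$ through the common forcing, the energy argument at $\delta=0$ and the overdamped slow-eigendirection cancellation for $\delta\gg1$, and the blow-up of the critical set plus Fenichel/exchange-lemma bookkeeping for the $o(1)$ remainders. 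The only (harmless) imprecisions are that the free-flight segment occurs for every $\delta>0$, not only for strong damping (though $w$ is constant there, so $e$ is unaffected), and that the Phase-1 difficulty is not a failure of normal hyperbolicity but that the initial condition lies on the critical set of a piecewise-smooth system whose Fenichel slow manifold is an artifact in $\hat y>0$ -- which is precisely what the paper's two-chart blow-up resolves.
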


\thmref{cor} is similar to \thmref{thm:main}, but now the rod starts in the first quadrant of \figref{fig:GB}. This theorem also gives an exact formula for initial conditions that separate (regularized) IWC and lift off.
 \begin{theorem}\thmlab{cor}
Consider an initial condition
 \begin{align}
 (y,w,\theta, \phi,v) = (0,\epsilon w_{10},\theta_0,\phi_0,v_0),\quad w_{10}<w_{1*}\equiv -\lambda_-(\theta_0)\frac{b(\theta_0,\phi_0)}{p_+(\theta_0)}<0,\eqlab{eq:icsNew}
 \end{align}
 with $\lambda_-$ defined in \eqref{eq:lambdapm} below,
within the region of indeterminacy (non-uniqueness) where
\begin{align}
 p_+(\theta_0)<0,\quad b(\theta_0,\phi_0)>0,\eqlab{eq:nonUnique}
\end{align}
and $q_+(\theta_0)<0$, $q_-(\theta_0)>0$, $a\ne 0$. 
  Then the conclusions of \thmref{thm:main}, including expressions \eqref{expr}, \eqsref{eDeltaLarge}{eDeltaSmall}, still hold true as $\epsilon \rightarrow 0$. For $w_{10}>w_{1*}$ lift-off occurs directly after a time $\mathcal O(\epsilon)$ with $w=\mathcal O(\epsilon)$. During this period $y=\mathcal O(\epsilon^2)$, 
 so $F_N=\mathcal O(1)$. 
 \end{theorem}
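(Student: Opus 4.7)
My plan is to mirror the proof of \thmref{main}, with the new ingredient being a saddle analysis in a slow blow-up chart that captures the $b(\theta_0,\phi_0)>0$ term at leading order. Concretely, I rescale $\hat y = \epsilon^{-2} y$, $\hat w = \epsilon^{-1} w$ and $\tau=t/\epsilon$, and freeze the slow variables at $(\theta_0,\phi_0,v_0)$ with $v_0>0$ so that only $p_+,q_+,c_+$ enter. Under the linear compliance \eqref{hLin}, the contact force $F_N = [-\hat y - \delta \hat w]$ is then $\mathcal{O}(1)$, and the planar $(\hat y,\hat w)$-subsystem at leading order reads
\begin{align*}
 \hat y' = \hat w, \qquad \hat w' = b(\theta_0,\phi_0) + p_+(\theta_0)\bigl[-\hat y - \delta \hat w\bigr],
\end{align*}
with the slow variables $(\theta,\phi,v)$ drifting only on the longer $\mathcal{O}(1)$ $t$-scale. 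Because $p_+(\theta_0)<0$, this planar system has a unique equilibrium at $(\hat y_*,0)=(b/p_+,0)$ in $\hat y<0$, and its Jacobian has characteristic polynomial $\lambda^2 + \delta p_+ \lambda + p_+$, with product $p_+<0$; hence the two real eigenvalues $\lambda_\pm$ defined in \eqref{eq:lambdapm} satisfy $\lambda_->0>\lambda_+$ (or the reverse convention used by the authors), making this equilibrium a saddle.

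The stable manifold of the saddle is tangent to the eigenvector associated with $\lambda_-$ and, being linear, it is the line $\hat w = \lambda_-(\hat y-\hat y_*)$; intersecting with the $\hat w$-axis gives the critical value $w_{1*} = -\lambda_-(\theta_0)\,b(\theta_0,\phi_0)/p_+(\theta_0)$ appearing in \eqref{eq:icsNew}. Starting from $(0,w_{10})$, a standard planar phase-plane argument then splits the behaviour cleanly: for $w_{10}<w_{1*}$ the trajectory lies on the side of the stable manifold that flows along the unstable direction into $\hat y \to -\infty$ (the rod penetrates more deeply), while for $w_{10}>w_{1*}$ it flows into $\hat y\to +\infty$ (the rod lifts off directly). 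I would verify that the drift of $(\theta,\phi,v)$ during the $\mathcal{O}(1)$ passage time $\tau$ is $\mathcal{O}(\epsilon)$ and therefore does not perturb this separation, except in the borderline case $w_{10}=w_{1*}$ which is explicitly excluded.

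In the IWC branch $w_{10}<w_{1*}$, once the trajectory escapes the saddle along its unstable manifold, the fast variable $\hat y$ grows in magnitude until $y$ reaches the same order of penetration as the entry condition analysed in \thmref{main}. I would then match this slow-chart trajectory to the entry of the blow-up chart used in the proof of \thmref{main}, which reduces the remaining dynamics verbatim to that theorem; the additional passage near the saddle adds only $\mathcal{O}(\epsilon)$ to the total time, preserving the $\mathcal{O}(\epsilon\ln\epsilon^{-1})$ estimate and the asymptotic formulas \eqref{expr}, \eqref{eDeltaLarge}, \eqref{eDeltaSmall}. In the lift-off branch $w_{10}>w_{1*}$, I would show directly in the slow chart that the trajectory reaches $\hat y=0$ with $\hat w$ of order one in $\mathcal{O}(1)$ $\tau$-time, which translates to $y=\mathcal{O}(\epsilon^2)$, $w=\mathcal{O}(\epsilon)$, $t=\mathcal{O}(\epsilon)$, $F_N=\mathcal{O}(1)$, as claimed. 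The main obstacle is rigorously justifying the transition-map matching from the slow chart (saddle escape) to the fast charts of \thmref{main}: I expect this to require a Shilnikov-type saddle passage estimate plus a $C^1$ dependence argument to ensure that the exit point of the slow chart provides valid initial data for the exponential regime of \thmref{main}, uniformly in $(\theta_0,\phi_0,v_0)$ on compact subsets of the indeterminate region satisfying $q_+(\theta_0)<0<q_-(\theta_0)$ and $a\ne 0$.
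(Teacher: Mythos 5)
Your proposal is correct and follows essentially the same route as the paper: in the chart $y=\epsilon^2\hat y_1$, $w=\epsilon w_1$ the critical set $C_1$ at $\hat y_1=b/p_+<0$ becomes a genuine saddle of the PWS system when $b>0$, and its stable manifold, meeting the $w_1$-axis at $w_{1*}=-\lambda_-(\theta_0)b/p_+$, separates direct lift-off from IWC, after which everything reduces to the proof of \thmref{thm:main}. The only superfluous element is the Shilnikov-type saddle-passage estimate: for $w_{10}$ bounded away from $w_{1*}$ the orbit remains an $\mathcal O(1)$ distance from the saddle in this chart, so the matching into the $\kappa_2$/Fenichel machinery of \thmref{thm:main} needs only the regular-perturbation arguments already established there (a delicate passage, namely the canard, occurs only at the excluded borderline $w_{10}=w_{1*}$).
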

%
%
%
%

\begin{remark}\remlab{rem:main}
These two theorems have not appeared before in the literature. In the rigid body limit ($\epsilon \rightarrow 0$), we recover IWC in both  cases. Previous authors have either not carried out the ``very difficult" calculation \cite{McClamroch1989}, performed numerical calculations \cite{ChampneysVarkonyi2016, DupontYamajako1997} or given a range of estimates for the time of (regularized) IWC in the {\it absence} of damping \cite{ZhaoLiuChenBrogliato2015}. We give exact and asymptotic expressions for key quantities as well as providing a geometric interpretion of our results, for a large class of rigid bodies, in the presence of a large class of normal forces, as well as giving a precise estimate for the time of (regularized) IWC, all in the presence of both stiffness and damping. 
Note that we are not attempting to describe all the dynamics around $P$. There is a canard connecting the third quadrant with the first and the analysis is exceedingly complicated \cite{3rd} due to fast oscillatory terms. Instead, we follow \cite{ZhaoLiuChenBrogliato2015} and consider that the rod dynamics starts in a configuration with $p_+(\theta_0)<0$. 
\end{remark}

\section{Proof of \thmref{thm:main}: IWC in the inconsistent case}\seclab{blowup}

The proof of \thmref{thm:main} is divided into three phases, illustrated in  \figref{fig:IWC}. These phases are a generalisation of the phases of IWC in its rigid body formulation \cite{ZhaoLiuMaChen2008}. 
\begin{itemize}
 \item Slipping compression (section \secref{sec:slipping}): During this phase $y$, $w$ and $v$ all decrease. The dynamics follow an unstable manifold $\gamma^u$ of a set of critical points $C$, given in \eqref{C} below, as $\epsilon \rightarrow 0$. Along $\gamma^u$ the normal force $F_N=\mathcal O(\epsilon^{-1})$ and $v$ will therefore quickly decrease to $0$. Mathematically this part is complicated by the fact that the initial condition \eqref{eq:ics} belongs to the critical set $C$ as $\epsilon\rightarrow 0$. 
   \item Sticking (section \secref{sec:stick}): Since $F_N=\mathcal O(\epsilon^{-1})$ and $q_+q_-<0$ the rod will stick with $v\equiv 0$. During this phase $\ddot y=\dot w>0$ and eventually sticking ends with $F_N=0$ as $\epsilon\rightarrow 0$. 
 \item Lift-off (section \secref{sec:lift}): In the final phase $F_N = 0$, lift off occurs and the system eventually returns to $y=0$. 
\end{itemize}


\subsection{Slow-fast setting: Initial scaling}\seclab{sec:scaling}
Before we consider the first phase of IWC, we apply the scaling 
\begin{align}\eqlab{eq:initialscaling}
y = \epsilon \hat y, 
\end{align}
also used in \cite{DupontYamajako1997, McClamroch1989}, which brings the two terms in \eqref{eq:FN}
to the same order. Now let
\begin{align}
 \hat F_N(\hat y,w) \equiv \epsilon F_N(\epsilon \hat y,w)=\left[-\hat y-\delta w\right].\eqlab{eq:hatFNHaty}
\end{align}
Equations \eqref{eq:PainleveEqs} then read:
\begin{align}
 {\hat y}' &= w,\eqlab{eq:haty}\\
 w' &=\epsilon b(\theta,\phi)+p_\pm (\theta) \hat F_N(\hat y,w),\nonumber\\
 \theta' &=\epsilon \phi,\nonumber\\
 \phi' &=c_\pm (\theta)\hat F_N(\hat y,w),\nonumber\\
 v' &=\epsilon a(\theta,\phi)+q_\pm(\theta)\hat F_N(\hat y,w),\nonumber
\end{align}
with respect to the {\it fast time} $\tau = \epsilon^{-1} t$ where $()'=\frac{d}{d\tau}.$ This is a slow-fast system in non-standard form \cite{McClamroch1989}.  Only $\theta$ is truly slow whereas $(\hat y,w,\phi,v)$ are all fast. But the set of critical points
 \begin{align}
  C=\{(\hat y,w,\theta,\phi,v)\vert \hat y=0, w=0\},\eqlab{C}
 \end{align}
 for $\epsilon=0$ is just three dimensional. System \eqref{eq:haty} is PWS \cite{krihog,krihog2}. We now show that  \eqref{eq:haty}$_+$ contains stable and unstable manifolds $\gamma^{s,u}$ when the equivalent rigid body equations exhibit a Painlev\'e paradox, when $p_+(\theta_0)<0$. The saddle structure of $C$ within the fourth quadrant has been recognized before \cite{Lesuanan1990, DupontYamajako1997, SongKrausKumarDupont2001}.

 \begin{proposition}
 Consider the system \eqref{eq:haty}$_+$. Then for $p_+(\theta_0)<0$ there exist smooth stable and unstable sets $\gamma^{s,u}(\theta_0,\phi_0,v_0)$, respectively, of $(\hat y,w,\theta,\phi,v)=(0,0,\theta_0,\phi_0,v_0)\in C$ contained within $\hat F_N\ge 0$ given by
  \begin{align}
\gamma^{s,u}(\theta_0,\phi_0,v_0) = \bigg\{(\hat y,w,\theta,\phi,v)\vert \quad w &= \lambda_\mp\hat y,\quad \theta = \theta_0,\quad \phi = \phi_0-{c_+(\theta_0)\lambda_\mp^{-1} \left[1+\delta \lambda_\mp\right]}\hat y,\nonumber\\
v&=v_0+\frac{q_+(\theta_0)}{p_+(\theta_0)}\lambda_\mp(\theta_0)\hat y,\quad \hat y\le 0\bigg\},\eqlab{eq:unstableManifoldHaty}
  \end{align}
  with $\lambda_\mp(\theta_0)\lessgtr 0$ given in \eqref{eq:lambdapm} below.
\end{proposition}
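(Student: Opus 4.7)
The plan is to set $\epsilon=0$ in \eqref{eq:haty}$_+$ and exploit two structural features of the resulting flow: the slow variable is frozen at $\theta=\theta_0$, and on the region $\hat F_N>0$ the remaining four equations are linear in $(\hat y,w,\phi,v)$. Hence $C$ in \eqref{C} is a three-dimensional manifold of equilibria at $\epsilon=0$, and the linearization about any point $(0,0,\theta_0,\phi_0,v_0)\in C$ has three zero eigenvalues (tangent to $C$ along the $\theta,\phi,v$ directions) together with the two roots of the $(\hat y,w)$ characteristic polynomial $\lambda^2+p_+(\theta_0)\delta\lambda+p_+(\theta_0)=0$. Under the hypothesis $p_+(\theta_0)<0$ the discriminant is positive and the product of roots $\lambda_+\lambda_-=p_+(\theta_0)$ is negative, so $\lambda_\pm(\theta_0)$ are real with $\lambda_+>0>\lambda_-$; this gives the formula \eqref{eq:lambdapm} together with the asserted sign of $\lambda_\mp$.

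Next I would identify the invariant eigenlines of the $(\hat y,w)$ block as $w=\lambda_\mp\hat y$ and verify that they remain in the correct branch of the piecewise smooth system \eqref{eq:hatFNHaty}. Along $w=\lambda_\mp\hat y$,
\[
\hat F_N=-\hat y-\delta w=-(1+\delta\lambda_\mp)\hat y,
\]
so I need the sign of $1+\delta\lambda_\mp$. Using the characteristic equation one computes $(2-p_+\delta^2)^2-\delta^2(p_+^2\delta^2-4p_+)=4>0$, which forces $2-p_+\delta^2>\delta\sqrt{p_+^2\delta^2-4p_+}$ and hence $1+\delta\lambda_\mp>0$. Consequently $\hat F_N\ge 0$ precisely when $\hat y\le 0$, and each half-line is invariant under the $+$ dynamics. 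Linearity of the $+$-branch flow then yields $\hat y(\tau)=\hat y(0)e^{\lambda_\mp\tau}$, so the half-line labelled by $\lambda_-$ is the strong stable set (converging to the equilibrium as $\tau\to+\infty$) while the one labelled by $\lambda_+$ is the strong unstable set, with smoothness in $(\theta_0,\phi_0,v_0)$ inherited from the smooth dependence of $\lambda_\mp,p_+,q_+,c_+$ on $\theta_0$.

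Finally, to recover the $\phi$ and $v$ components of \eqref{eq:unstableManifoldHaty} I would integrate $\phi'=c_+(\theta_0)\hat F_N$ and $v'=q_+(\theta_0)\hat F_N$ along these half-lines: with $\theta\equiv\theta_0$ and $\hat F_N=-(1+\delta\lambda_\mp)\hat y$, the quotients $d\phi/d\hat y=-c_+(\theta_0)(1+\delta\lambda_\mp)/\lambda_\mp$ and $dv/d\hat y=-q_+(\theta_0)(1+\delta\lambda_\mp)/\lambda_\mp$ are constants, so they integrate immediately to linear expressions in $\hat y$ with $\phi=\phi_0,\,v=v_0$ at $\hat y=0$. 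The $\phi$ formula appears verbatim; for $v$ I would simplify the coefficient by dividing the characteristic equation by $p_+\lambda_\mp$ to obtain the identity $\lambda_\mp^{-1}+\delta=-\lambda_\mp/p_+(\theta_0)$, which collapses the prefactor to $q_+(\theta_0)\lambda_\mp/p_+(\theta_0)$ as claimed. The main obstacle is not dynamical but notational: one must carefully pair the two sign conventions in $\lambda_\mp$ and in $\hat y\le 0$ so that both half-lines lie in $\hat F_N\ge 0$; once the skew-product-over-frozen-$\theta$ structure of the $\epsilon=0$ $+$-branch is recognized, the construction reduces to a short linear ODE computation.
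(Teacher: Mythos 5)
Your proposal is correct and follows essentially the same route as the paper: linearize the smooth branch $\hat F_N=-\hat y-\delta w$ of the layer problem about a point of $C$, obtain the two nonzero eigenvalues $\lambda_\pm$ with $\lambda_-<0<\lambda_+$ from $p_+<0$, check that the eigenlines lie in $\hat F_N\ge 0$ for $\hat y\le 0$, and read off the $\phi,v$ components. The only cosmetic difference is that you establish $1+\delta\lambda_\mp>0$ by a discriminant computation, whereas the paper gets it at once from the identity $\lambda_\pm^2=-p_+(\theta)(1+\delta\lambda_\pm)$ in \eqref{eq:lambdapm2}, which you in effect rederive when simplifying the $v$-coefficient.
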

 
\begin{proof}
Consider the smooth system, \eqref{eq:haty}$_{\hat F_N = -\hat y-\delta w}$, obtained from \eqref{eq:haty} by setting $\hat F_N=-\hat y-\delta w$. The linearization of \eqref{eq:haty}$_{\hat F_N = -\hat y-\delta w}$ about a point in $C$ then only has two 
non-zero eigenvalues:
\begin{align}
 \lambda_{\pm}(\theta)  &= -\frac{\delta p_+(\theta)}{2}\pm \frac12 \sqrt{\delta^2 p_+(\theta)^2 -4 p_+(\theta)},\eqlab{eq:lambdapm}
\end{align}
satisfying
\begin{align}
 \lambda_\pm^2 = -p_+(\theta) \left(1+\delta \lambda_\pm \right).\eqlab{eq:lambdapm2}
\end{align}
For $p_+(\theta)<0$ we have $\lambda_-<0<\lambda_+$. The eigenvectors associated with $\lambda_\pm$ are
$v_\pm = \left(1,\lambda_\pm,0,\frac{c_+}{p_+(\theta)}\lambda_\pm,\frac{q_+}{p_+(\theta)}\lambda_\pm\right)^T.$
Therefore the smooth system \eqref{eq:haty}$_{\hat F_N = -\hat y-\delta w}$ has a (stable, unstable) manifold $\gamma^{s,u}$ tangent to $v_\mp$ at $(\hat y,w,\theta,\phi,v)=(0,0,\theta_0,\phi_0,v_0)$. But then for $\hat y\le 0$, we have
 $\hat F_{N}(\hat y,\lambda_\pm \hat y) = -(1+\delta \lambda_\pm)\hat y = \frac{\lambda_\pm^2 }{p_+(\theta)}\hat y\ge 0$,
by \eqref{eq:lambdapm2}. Hence the restrictions of $\gamma^{s,u}$ in \eqref{eq:unstableManifoldHaty} to $\hat y<0$ are (stable, unstable) sets of $C$ for the PWS system \eqref{eq:haty}$_{\hat F_N= \left[-\hat y-\delta w\right]}$.
\end{proof}
\begin{remark}\remlab{CCalc}
%
  For the smooth system \eqref{eq:haty}$_{\hat F_N = -\hat y-\delta w}$, the critical manifold $C$ perturbs by Fenichel's theory \cite{fen1, fen2, fen3} to a smooth slow manifold $C_\epsilon$, being $C^\infty$ $\mathcal O(\epsilon)$-close to $C$. A simple calculation shows that 
$C_\epsilon:\, \hat y = \epsilon \frac{b(\theta,\phi)}{p_+(\theta)}(1+\mathcal O(\epsilon)),\, w=\mathcal O(\epsilon^2)$.
Since $b(\theta,\phi) <0$ in this case, $C_\epsilon\subset \{\hat y>0\}$ for $\epsilon$ sufficiently small. Therefore the manifold $C_\epsilon$ is invariant for the smooth system \eqref{eq:haty}$_{\hat F_N = -\hat y-\delta w}$ only. It is an artifact for the PWS system  \eqref{eq:haty}$_{\hat F_N= \left[-\hat y-\delta w\right]}$ since the square bracket vanishes for $\hat y>0$, by \eqref{eq:compliance}. 
\end{remark}


\begin{remark}\remlab{rem}
 Our arguments are geometrical and rely on hyperbolic methods of dynamical systems theory only. Therefore the results remain unchanged qualitatively if we replace the piecewise linear $\hat F_N$ in  \eqref{eq:hatFNHaty} with the nonlinear version 
  $\hat F_N(\hat y,w) = \left[h(\hat y,w)\right]$,
 where $h(\hat y,w)=-\hat y-\delta w+\mathcal O((\hat y+w)^2$ as in \eqref{hNon},
 having \eqref{hLin} as its linearization about $\hat y=w=0$. We would obtain again a saddle-type critical set $C$ with nonlinear (stable, unstable) manifolds $\gamma^{s,u}$. 
\end{remark}
Following the initial scaling \eqref{eq:initialscaling} of this section, we now consider the three phases of IWC.
\begin{figure}[h!] 
\begin{center}
\subfigure[]{\includegraphics[width=.495\textwidth]{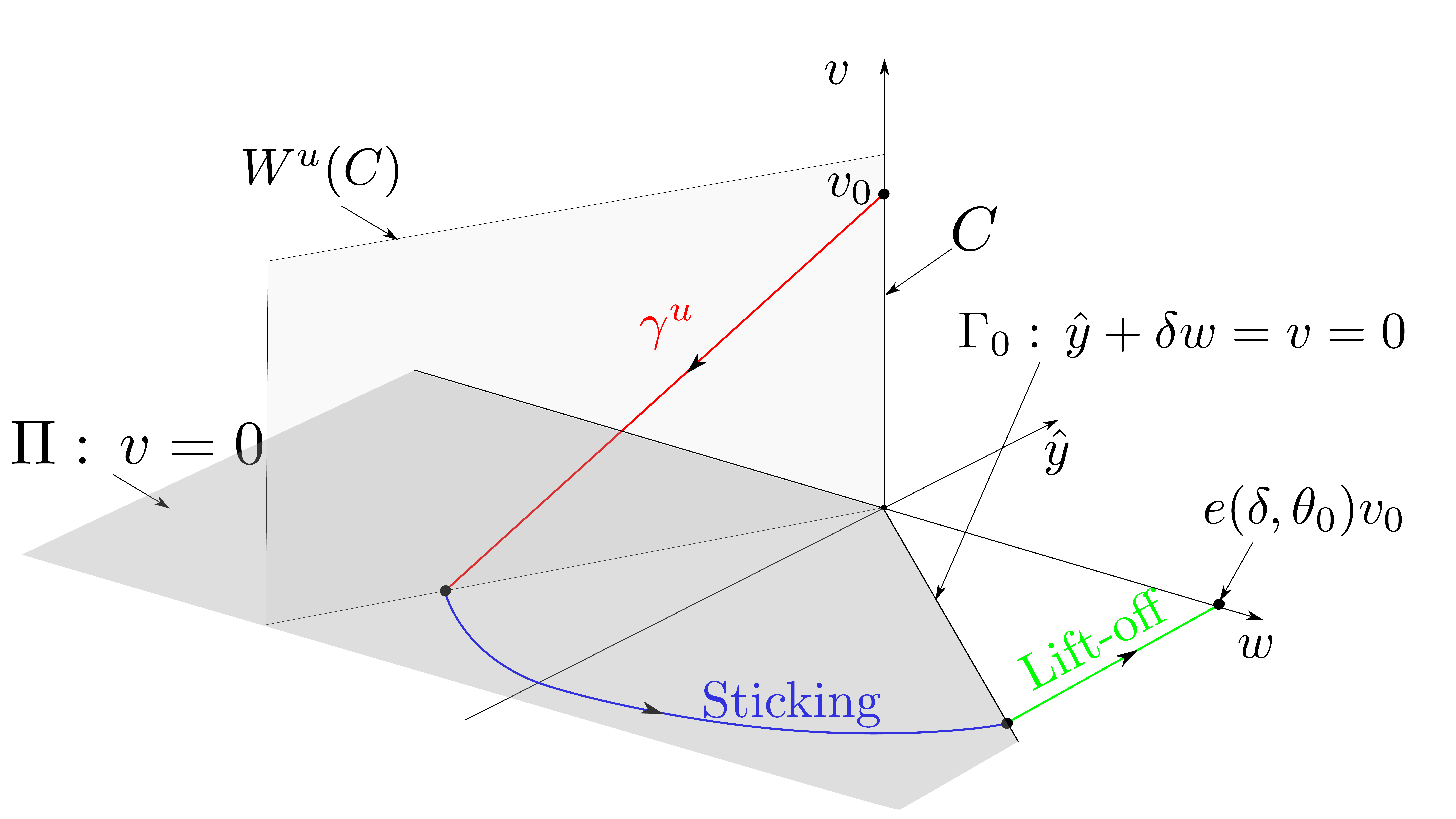}}
\subfigure[]{\includegraphics[width=.495\textwidth]{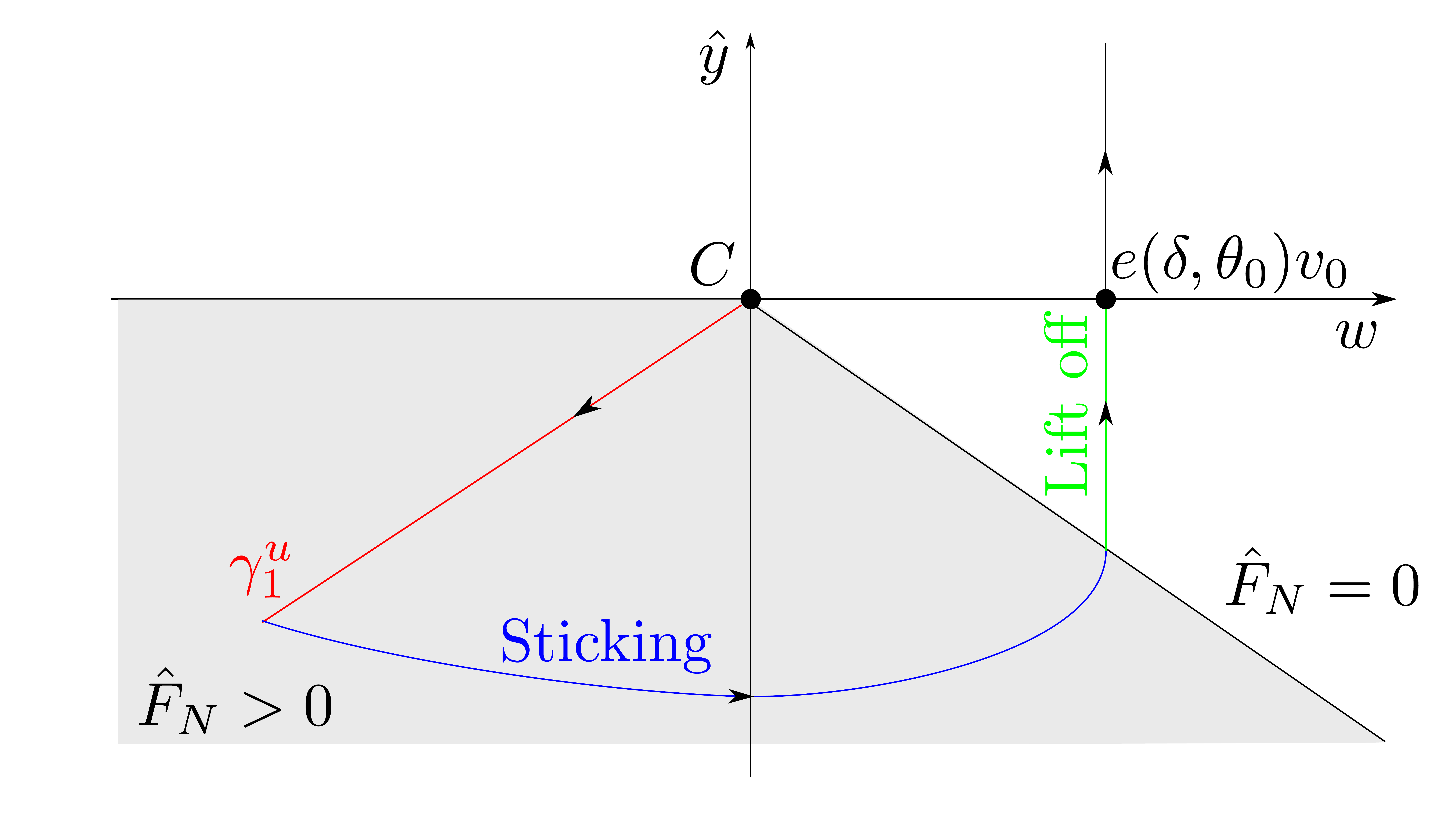}}
\end{center}
 \caption{The limit $\epsilon\rightarrow 0$ shown using (a)  the $(w,\hat y,v)$-variables and (b) a projection onto the $(w,\hat y)$-plane. The {\it slipping compression} phase, shown in red, where $\hat y$, $w$ and $v>0$ all decrease, is described geometrically by an unstable manifold $\gamma^u$ \eqref{eq:unstableManifoldHaty} of a critical set $C$, given in \eqref{C}. 
 The {\it sticking} phase (in blue) is described by Filippov \cite{filippov1988differential}. 
 Finally the {\it lift-off} phase (in green) occurs and we return to $\hat y=0$. In both figures the grey region is where $\hat F_N>0$.
 }
 \figlab{fig:IWC}
\end{figure}

\subsection{Slipping compression}\seclab{sec:slipping}

Now we describe the first phase of the regularized IWC: \textit{slipping compression}, which ends when $v=0$. We define the following section (or {\it switching manifold}), $\Pi$ shown in \figref{fig:IWC}(a):
\begin{align}
 \Pi=\{(\hat y,w,\theta,\phi,v)\vert v=0\}.\eqlab{eq:Pi2}
\end{align}
\propref{prop:slip} describes the intersection of the forward flow of initial conditions \eqref{eq:ics} with $\Pi$; in other words, the values of  $\hat y,w,\theta, \phi$ at the {\it end} of the slipping compression phase. 
\begin{proposition}\proplab{prop:slip}
 The forward flow of the initial conditions \eqref{eq:ics}
 under \eqref{eq:haty} intersects $\Pi$ in
 \begin{align}
 \gamma^u\cap \Pi+o(1)\equiv \bigg\{(\hat y,w,\theta,\phi,0)\vert \quad \hat y &= -\frac{p_+(\theta_0)}{q_+(\theta_0) \lambda_+(\theta_0)}v_0+ o(1),\quad w = -\frac{p_+(\theta_0)}{q_+(\theta_0)}v_0+ o(1) \nonumber\\
 \theta &=\theta_0+o(1),\quad \phi =\phi_0-\frac{c_+(\theta_0)}{q_+(\theta_0)}v_0 + o(1),\bigg\},\eqlab{eq:z2EpsPi2}
   \end{align}
   as $\epsilon\rightarrow 0$.   
\end{proposition}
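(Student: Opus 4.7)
The strategy is to exploit the saddle structure of the critical set $C$ for the fast system \eqref{eq:haty} together with the explicit formula \eqref{eq:unstableManifoldHaty} for its unstable set $\gamma^u$. The forward flow from \eqref{eq:ics} splits naturally into a short \emph{escape} stage, during which the trajectory drifts away from $C$ along $\gamma^u$, and a subsequent \emph{tracking} stage along $\gamma^u$ until the section $\Pi=\{v=0\}$ is reached.

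First I would linearize \eqref{eq:haty}$_+$, restricted to the half-space $\hat y\le 0$ where $\hat F_N=-\hat y-\delta w$, about the reference point $p_0=(0,0,\theta_0,\phi_0,v_0)\in C$. The linearization has a three-dimensional center kernel tangent to $C$ and a two-dimensional hyperbolic block with eigenvalues $\lambda_\pm(\theta_0)$ from \eqref{eq:lambdapm} and eigenvectors $v_\pm=(1,\lambda_\pm,0,c_+\lambda_\pm/p_+,q_+\lambda_\pm/p_+)^T$. Decomposing the perturbation $(0,\mathcal O(\epsilon),0,0,0)$ of $p_0$ along $v_+$, $v_-$ and the kernel yields unstable and stable amplitudes $a_\pm=\mathcal O(\epsilon)$ and a trivial center component.

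Second, I would show that after fast time $\tau^*=\lambda_+^{-1}(\theta_0)\ln\epsilon^{-1}$ the $v_+$-component has grown to $\mathcal O(1)$, the $v_-$-component has decayed to $o(1)$, and the slow drift satisfies $\theta-\theta_0=\mathcal O(\epsilon\tau^*)=o(1)$. A Gronwall estimate controlling the nonlinear remainder, valid while the trajectory stays in a fixed small neighborhood of $p_0$, upgrades this linear picture to the full system, giving a trajectory that is $o(1)$-close to $\gamma^u(\theta_0,\phi_0,v_0)$ yet at $\mathcal O(1)$-distance from $C$ at $\tau=\tau^*$. A cleaner geometric alternative, in line with \remref{rem} and the approach of \cite{krihog,krihog2}, is to blow up the degenerate set $C$ and identify $\gamma^u$ as a normally hyperbolic invariant manifold of the desingularized system.

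Third, on the $\mathcal O(1)$-portion of $\gamma^u$ regular Fenichel tracking keeps the trajectory $o(1)$-close to $\gamma^u$ while $\theta$ moves by only $o(1)$. On $\gamma^u$ the $v$-equation reduces to $v'=q_+(\theta_0)\lambda_+^2\hat y/p_+(\theta_0)$, which is strictly negative (since $p_+,q_+<0$ and $\hat y<0$ on $\gamma^u$), so $v$ decreases monotonically from $v_0$ to zero in $\mathcal O(1)$ fast time. The intersection $\gamma^u\cap\Pi$ is then read off algebraically from \eqref{eq:unstableManifoldHaty} with $\lambda=\lambda_+$ by setting $v=0$ and solving for $\hat y$; the expressions for $\hat y$ and $w$ follow immediately, and the $\phi$-coefficient simplifies via \eqref{eq:lambdapm2} in the form $1+\delta\lambda_+=-\lambda_+^2/p_+$ to $-c_+(\theta_0)/q_+(\theta_0)$, as stated. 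The total elapsed fast time is $\tau^*+\mathcal O(1)=\mathcal O(\ln\epsilon^{-1})$, i.e.\ real time $\mathcal O(\epsilon\ln\epsilon^{-1})$.

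The main obstacle is the escape stage, since the initial condition lies on $C$ in the $\epsilon\to 0$ limit and regular invariant-manifold perturbation theory does not apply directly at the start. Two subtleties require care: (i) if $w$ at $\tau=0$ happens to be slightly positive, the trajectory may briefly enter $\hat y>0$, where $\hat F_N=0$; the sign $b(\theta_0,\phi_0)<0$ from \eqref{eq:nonExistence} pulls $w$ down and ensures re-entry into $\hat y\le 0$ in $\mathcal O(1)$ fast time; (ii) the decay estimate for the $v_-$-component must be made uniform in $\epsilon$, which is precisely what the blow-up framework of \cite{krihog,krihog2} provides.
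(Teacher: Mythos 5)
Your overall route --- escape from the critical set $C$ along $\gamma^u$, a logarithmically long tracking stage, and then reading off $\gamma^u\cap\Pi$ algebraically from \eqref{eq:unstableManifoldHaty} using \eqref{eq:lambdapm2} --- is the same as the paper's, and your final algebra (the simplification of the $\phi$-coefficient to $-c_+/q_+$, the monotone decrease of $v$ along $\gamma^u$, the time estimate) is correct. The paper implements the escape and tracking via the blowup \eqref{eq:blowup}: the scaling chart $\kappa_1$, where the layer problem is solved explicitly (\lemmaref{lemma1}), and the directional chart $\kappa_2$, where Fenichel's normal form controls the slow drift of $(\theta,\phi,v)$ over the $\mathcal O(\ln\epsilon^{-1})$ passage (\lemmaref{finalcompress}).

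However, your escape stage has a genuine gap. You decompose the perturbation $(0,\mathcal O(\epsilon),0,0,0)$ of $p_0\in C$ along $v_\pm$ and conclude that the unstable amplitude $a_+$ is $\mathcal O(\epsilon)$, hence grows to $\mathcal O(1)$ after $\tau^*=\lambda_+^{-1}\ln\epsilon^{-1}$. That step needs a \emph{lower} bound $|a_+|\ge c\epsilon$, which does not follow from \eqref{eq:ics}: if $w(0)=0$ (allowed by \eqref{eq:ics}) your decomposition gives $a_+=0$ and the linearization about $p_0$ predicts no escape at all, while if $w(0)>0$ it gives $a_+>0$, i.e.\ growth towards $\hat y>0$, the wrong branch of $\gamma^u$. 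What actually drives the escape is the inhomogeneous term $\epsilon b(\theta,\phi)$ in the $w$-equation, which is absent from your linearization about $p_0$: since $b<0$ and $p_+<0$, the perturbed critical manifold $C_\epsilon$ sits at $\hat y=\epsilon b/p_+>0$, inside the region where $\hat F_N$ vanishes, so it is an artifact and every initial condition \eqref{eq:ics} (possibly after the brief excursion into $\hat y>0$ you note) acquires a strictly negative unstable amplitude of size exactly $\mathcal O(\epsilon)$. This is precisely what chart $\kappa_1$ makes visible: after the zoom \eqref{eq:K2} the forcing becomes $\mathcal O(1)$ ($w_1'=b+p_+\hat F_{N,1}$) and the explicit solution \eqref{eq:haty2Sol} shows the coefficient of $e^{\lambda_+\tau}$ is bounded away from zero uniformly over $w_1(0)\le 0$. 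Repair your argument either by measuring the displacement from $C_\epsilon$ rather than from $p_0$ (and checking its unstable component is bounded below), or by adopting the rescaled chart. The remaining concern --- Gronwall over a time interval of length $\ln\epsilon^{-1}$ --- you flag yourself; it is harmless for the linear truncation \eqref{hLin}, but for general $h$ as in \eqref{hNon} it is exactly what the Fenichel normal form in chart $\kappa_2$ is needed for.
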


\begin{remark}\remlab{here}
The $o(1)$-term in \eqref{eq:z2EpsPi2} is $\mathcal O(\epsilon^c)$ for any $c\in(0,1)$ (see also \lemmaref{finalcompress} below).
\end{remark}

%
%
\subsubsection{Proof of \propref{prop:slip}}\seclab{sec:propSlip}
We prove \propref{prop:slip} using Fenichel's normal form theory \cite{jones_1995}. 
But since \eqref{eq:haty}$_{\hat F_N= \left[-\hat y-\delta w\right]}$ is piecewise smooth, care must be taken. There are at least two ways to proceed. One way is to consider the smooth system \eqref{eq:haty}$_{\hat F_N=-\hat y-\delta w}$, then rectify $C_\epsilon$ by straightening out its stable and unstable manifolds. Then \eqref{eq:haty}$_{\hat F_N=-\hat y-\delta w}$ will be a standard slow-fast system to which Fenichel's normal form theory applies. Subsequently one would then have to ensure that conclusions based on the smooth \eqref{eq:haty}$_{\hat F_N=-\hat y-\delta w}$ also extend to the PWS system \eqref{eq:haty}$_{\hat F_N= \left[-\hat y-\delta w\right]}$. One way to do this is to consider the following scaling
\begin{align}
 \kappa_1:\quad \hat y = r_1 \hat y_1,\quad w= r_1 w_1,\quad \epsilon=r_1,\eqlab{eq:K2}
\end{align}
zooming in on $C$ at $\hat y=0,\,w=0$. In terms of the original variables, $y=\epsilon^2\hat y_1$, $w=\epsilon w_1$.  The scalings $(\hat y,w)$ and $(\hat y_1,w_1)$ have both appeared in the literature \cite{ChampneysVarkonyi2016, DupontYamajako1997, McClamroch1989}.

In this paper we follow another approach (basically reversing the process described above) which works more directly with the PWS system. Therefore in Section \secref{sec:kappa1} we study the scaling \eqref{eq:K2} first.
 We will show that the $(\hat y_1,w_1)$-system contains important geometry of the PWS system (significant, for example, for the separation of initial conditions in \thmref{cor}). Then in Section \secref{sec:kappa2} we connect the ``small'' ($\hat y=\mathcal O(\epsilon),w=\mathcal O(\epsilon)$) described by \eqref{eq:K2} with the ``large'' ($\hat y=\mathcal O(1),w=\mathcal O(1)$) in \eqref{eq:haty} by considering coordinates described by the following transformation:
  \begin{align}
 \kappa_2:&\quad \hat y = -r_2,\quad w = r_2 w_2,\quad \epsilon = r_2 \epsilon_2.\eqlab{eq:kappa1}
  \end{align}
  For $y_1<0$ we have the following coordinate change $\kappa_{21}$ between $\kappa_1$ and $\kappa_2$:
\begin{align}
 \kappa_{21}:\quad r_2 = -r_1 y_1,\quad w_2 = -w_1 y_1^{-1},\quad \epsilon_2 = -y_1^{-1}.\eqlab{eq:kappa21}
\end{align}
%
The coordinates in $\kappa_2$ \eqref{eq:kappa1} appear as a \textit{directional chart} obtained by setting $\bar y=-1$ in the blowup transformation $(r,\overline{\hat y},\bar w,\bar \epsilon)\mapsto (\hat y,w,\epsilon)$ given
by\footnote{More accurately, the chart $\bar y=-1$ corresponds to
\begin{align*}
 r = r_1\sqrt{1+\epsilon_1^2+w_1^2},\, \overline{\hat y} = -1/\sqrt{1+\epsilon_1^2+w_1^2},\, \bar w=w_1/\sqrt{1+\epsilon_1^2+w_1^2},\,\bar \epsilon=\epsilon_1/\sqrt{1+\epsilon_1^2+w_1^2},
\end{align*}
with $\overline{\hat y}^2+ \overline{w}^2+\overline{\epsilon}^2=1.$ See \cite{kuehn2015} for further details on directional and scaling charts.}
\begin{align}
 \hat y = r\bar {\hat y},\quad w=r\bar w,\quad \epsilon = r\bar \epsilon,\quad r\ge 0,\quad (\overline{\hat y},\bar w,\bar \epsilon)\in S^2=\{(\overline{\hat y},\bar w,\bar \epsilon)\vert \overline{\hat y}^2+\bar w^2+\bar \epsilon^2=1\}.\eqlab{eq:blowup}
\end{align}
The blowup is chosen so that the zoom in \eqref{eq:K2} coincides with the \textit{scaling chart} obtained by setting $\bar \epsilon=1$. The blowup transformation {\it blows up} $C$ to $\bar C:\, r=0,\,(\overline{\hat y},\bar w,\bar \epsilon)\in S^2$ a space $(\theta,\phi,v)\in \mathbb R^3$ of spheres.\footnote{Note that \eqref{eq:blowup} is not a {\it blowup} transformation in the sense of Krupa and Szmolyan \cite{krupa_extending_2001}, where geometric blowup is applied in conjunction with desingularization to study loss of hyperbolicity in slow-fast systems. We will not desingularize the vector-field here.}

The main advantage of our approach is that in chart $\kappa_2$ we can focus on $\overline C\cap \{\overline{\hat y}^{-1}\overline w>-\delta^{-1}, \,\overline{\hat y}<0\}$ (or simply $w_2<\delta^{-1}$ in \eqref{eq:kappa1}) of $\overline C$, the grey area in \figref{fig:IWC}, where 
\begin{align}
r^{-1} \hat F_N(\hat y,w) = \left[-\overline{\hat y} - \delta \overline{w}\right] = -\overline{\hat y}\left( 1+\delta \overline{\hat y}^{-1}\overline w \right)>0,\eqlab{CbarCalc}
\end{align}
and the system will be smooth. This enables us to apply Fenichel's normal form theory \cite{jones_1995} there. All the necessary patching for the PWS system is done independently in the scaling chart $\kappa_1$. Also chart $\kappa_2$ enables a matching between the two scalings that have appeared in the literature: {$\kappa_1\cap\{\hat y_1<0,\,r_1=0\}$, visible within $r_2=0$ of \eqref{eq:kappa1}, and system \eqref{eq:haty}$_{\epsilon=0}$, visible within $\epsilon_2=0$ of \eqref{eq:kappa1}}. 

\subsubsection{Chart $\kappa_1$}\seclab{sec:kappa1}
Let
  $\hat F_{N,1}(\hat y_1,w_1) = \epsilon^{-1} \hat F_N(\epsilon \hat y_1,\epsilon w_1) = \left[-\hat y_1-\delta w_1\right].$
Then applying chart $\kappa_1$ in \eqref{eq:K2} to the non-standard slow-fast system \eqref{eq:haty} gives the following equations:
\begin{align}
 {\hat y}_1' &= w_1,\eqlab{eq:haty2}\\
 w'_1 &=b(\theta,\phi)+p_+(\theta) \hat F_{N,1}(\hat y_1,w_1),\nonumber\\
 \theta' &=\epsilon \phi,\nonumber\\
 \phi' &=\epsilon c_+(\theta)\hat F_{N,1}(\hat y_1,w_1),\nonumber\\
 v' &=\epsilon \left(a(\theta,\phi)+q_+(\theta)\hat F_{N,1}(\hat y_1,w_1)\right),\nonumber
\end{align}

Equation \eqref{eq:haty2} is a slow-fast system in standard form: $(\hat y_1, w_1)$ are fast variables whereas $(\theta,\phi,v)$ are slow variables. By assumption \eqref{eq:nonExistence} of \thmref{thm:main}, $b<0$, $p_+<0$ and so, since $\hat F_{N,1}(\hat y_1,w_1)\ge 0$, we have $w_1'<0$ in \eqref{eq:haty2}. Hence there exists no critical set for the PWS system \eqref{eq:haty2}$_{\epsilon=0}$. The critical set $C_1$ of the {\it smooth} system \eqref{eq:haty2}$_{\hat F_{N,1}=-\hat y_1-\delta w_1}$, given by
\begin{align}
 C_1=\{(\hat y_1,w_1,\theta,\phi,v)\vert \hat y_1=\frac{b(\theta,\phi)}{p_+(\theta)},\,w_1 = 0\}, \eqlab{eqC1}
\end{align}
lies within $\hat y_1>0$. So  $C_1$ is an invariant of  \eqref{eq:haty2}$_{\hat F_{N,1}=-\hat y_1-\delta w_1}$ but an artifact of the PWS system \eqref{eq:haty2}$_{\hat F_{N,1}=[-\hat y_1-\delta w_1]}$, as shown in \figref{g}(a) (recall also \remref{CCalc}).
 
 \begin{figure}[h!] 
\begin{center}
\subfigure[]{\includegraphics[width=.495\textwidth]{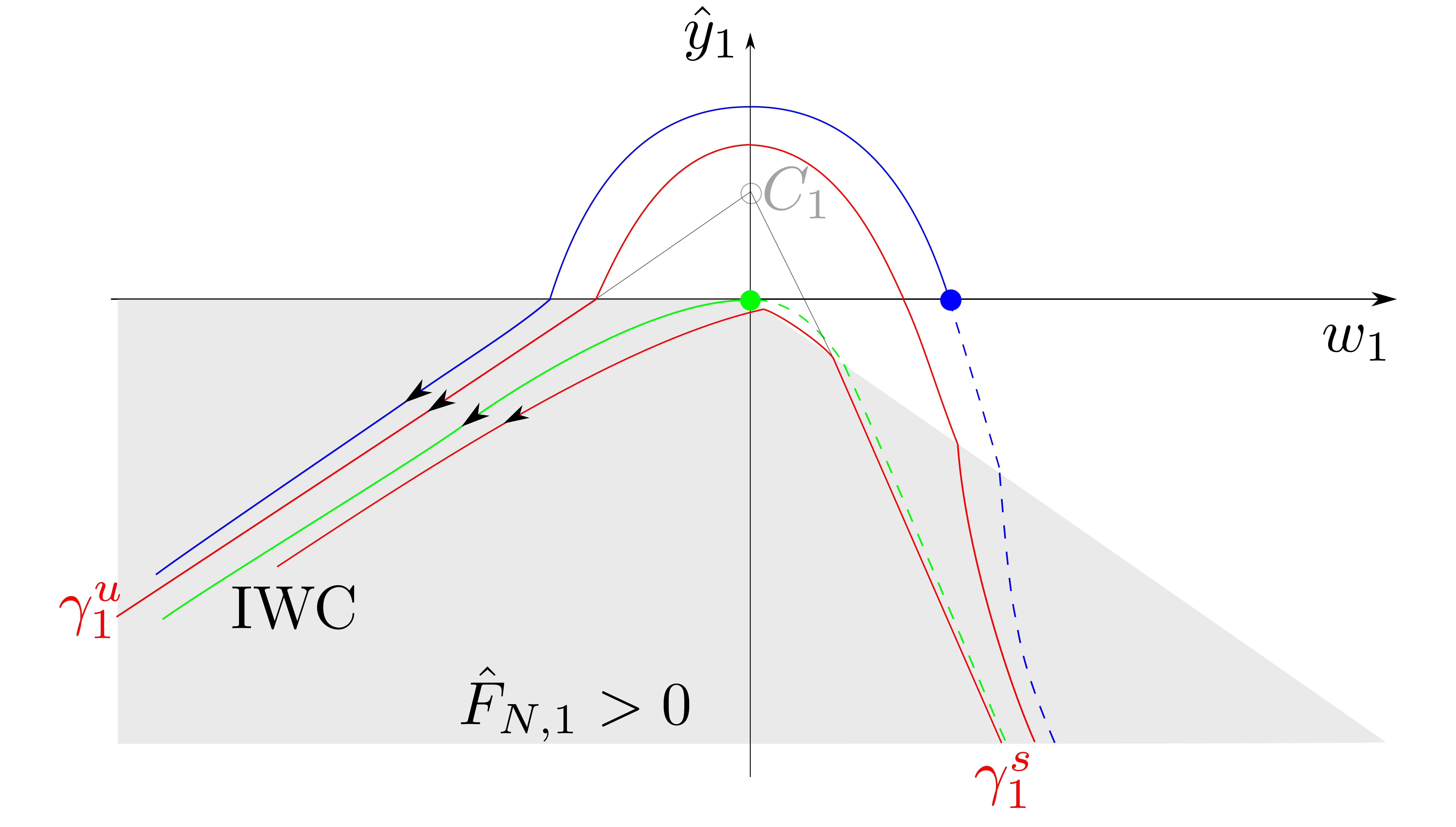}}
\subfigure[]{\includegraphics[width=.495\textwidth]{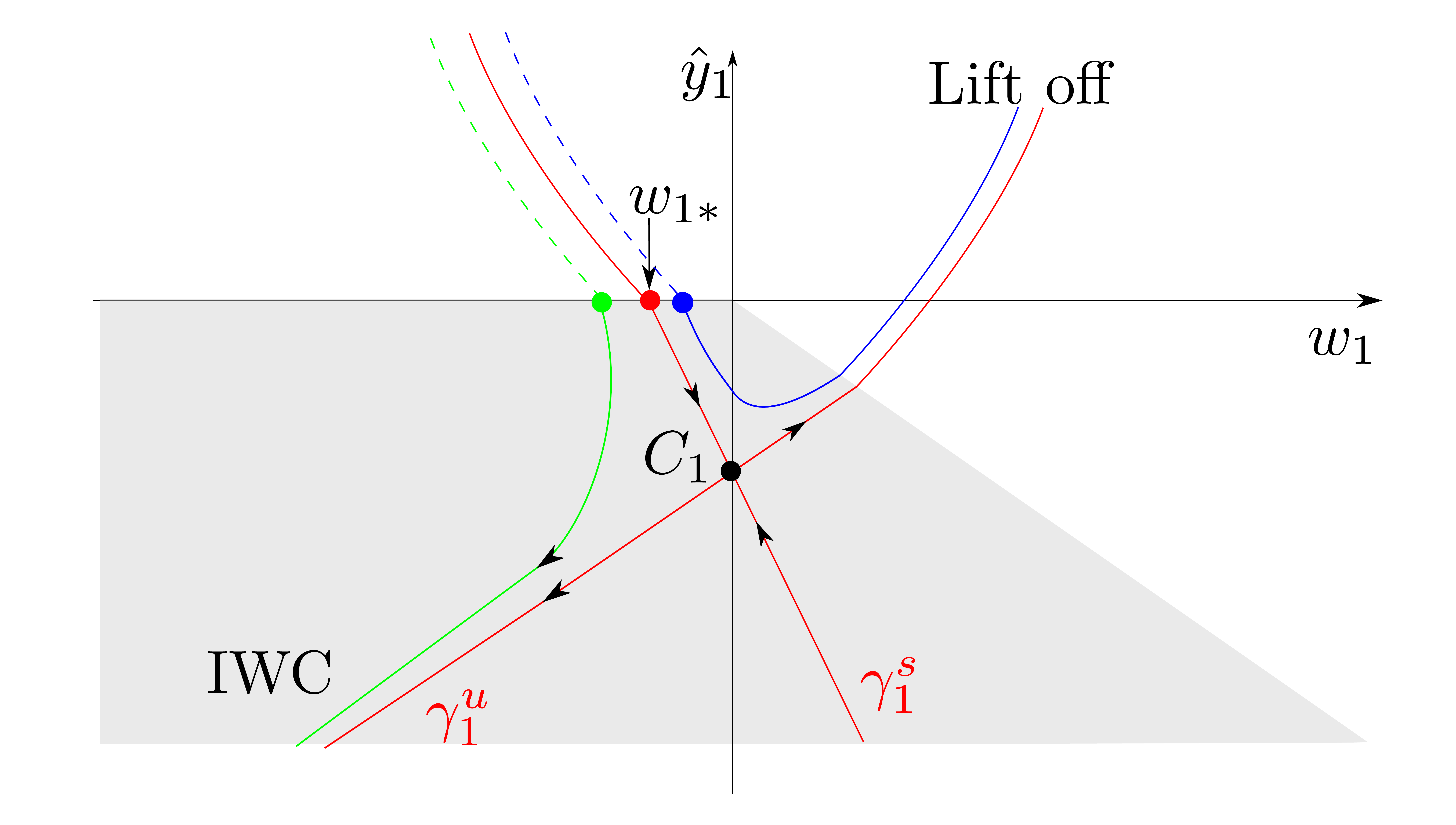}}
\end{center}
 \caption{(a) Phase portrait \eqref{eq:haty2}$_{\epsilon=0}$ for $b<0$ (\thmref{thm:main} in this Section). The critical set $C_1$  of \eqref{eq:haty2}$_{\hat F_{N,1}=-\hat y_1-\delta w_1}$, given by \eqref{eqC1}, is an artifact of the PWS system \eqref{eq:haty2}$_{\hat F_{N,1}=[-\hat y_1-\delta w_1]}$. (b) Phase portrait \eqref{eq:haty2}$_{\epsilon=0}$ for $b>0$ (\thmref{cor} in Section \secref{new}). Here $C_1$ is a saddle-type critical manifold for the PWS system, $\gamma_1^u$ is given by \eqref{eq:gammau1}, $\gamma_1^s$ by \eqref{eq:gammas1} and $w_{1*}$ by \eqref{eq:w1star}. The grey region is now where $\hat F_{N,1}>0$. Orbit segments outside this region  are parabolas turning downwards and upwards in (a) and (b), respectively. Dashed lines indicate backward orbits, from initial conditions on the $w_1$-axis. Similar figures from numerical computations appear in \cite{ChampneysVarkonyi2016}. }
 \figlab{g}
\end{figure}
 
The unstable manifold $ \gamma_1^u$ of $C_1$ in the smooth system \eqref{eq:haty2}$_{\hat F_{N,1}=-\hat y_1-\delta w_1}$ is given by
\begin{align}\eqlab{eq:gammau1}
 \gamma_1^u(\theta_0,\phi_0,v_0) = \bigg\{(\hat y_1,w_1,\theta_0,\phi_0,v_0)\vert \quad \hat y_1&=\frac{b(\theta_0,\phi_0)}{p_+(\theta_0)}+s,\quad w_1 =\lambda_+(\theta_0)s,\quad s\le -\frac{b(\theta_0,\phi_0)}{p_+(\theta_0)}\bigg\}
\end{align}
and its restriction to the subset $\hat y_1\le 0$, $w_1\le 0$ where $\hat F_{N,1}\ge 0$, is locally invariant for the PWS system \eqref{eq:haty2}$_{\hat F_{N,1}=\left[-\hat y_1-\delta w_1\right]}$.

In chart $\kappa_1$, initial conditions \eqref{eq:ics} now become:  
\begin{align}
  (\hat y_1,w_1,\theta, \phi,v) = (0,\mathcal O(1),\theta_0,\phi_0,v_0). \eqlab{eq:hat1ICs}
\end{align}
In \lemmaref{lemma1}, we determine the values of the variables during the slipping compression phase, starting from initial conditions \eqref{eq:hat1ICs}, as seen in chart $\kappa_1$, and show that the system remains close to  $\gamma_1^u(\theta_0,\phi_0,v_0)$.

\begin{lemma}\lemmalab{lemma1}
Consider 
$ \Lambda_1=\{(\hat y_1,w_1,\theta,\phi,v)\vert \hat y_1 = -\nu^{-1}\}$
with $\nu>0$ small. Then the forward flow of \eqref{eq:hat1ICs} under \eqref{eq:haty2} intersects $\Lambda_1$ in 
\begin{align}\eqlab{eq:z1}
 z_{1}(\epsilon) \equiv (-\nu^{-1},w_{1c}(\nu)+\mathcal O(\epsilon),\theta_0+\mathcal O(\epsilon),\phi_0+\mathcal O(\epsilon),v_0+\mathcal O(\epsilon)).
\end{align}
where 
$ w_{1c} (\nu)= -\lambda_+ \nu^{-1}(1+o(1)),\quad \nu\rightarrow 0.$
\end{lemma}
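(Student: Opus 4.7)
The plan is to treat \eqref{eq:haty2} in chart $\kappa_1$ as a slow-fast system in standard form, with fast variables $(\hat y_1,w_1)$ and slow variables $(\theta,\phi,v)$. First I will analyse the frozen ($\epsilon=0$) planar PWS system in $(\hat y_1,w_1)$ at $(\theta_0,\phi_0,v_0)$, and then add back the $\mathcal O(\epsilon)$ slow drift by regular perturbation. The key geometric object is the unstable manifold $\gamma_1^u$ from \eqref{eq:gammau1} of the critical point $C_1$ in \eqref{eqC1}: although $C_1$ lies in $\hat y_1>0$ and is an artifact of the PWS system, its branch in $\hat y_1<0$ lies inside the grey region $\hat F_{N,1}>0$ and is a genuine locally invariant set of the PWS system (cf.\ \figref{g}(a)). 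I will show that every forward orbit from \eqref{eq:hat1ICs} is driven into the grey region and aligns exponentially with $\gamma_1^u$, and then read off the intersection with $\Lambda_1$ from the asymptotics of $\gamma_1^u$ as $\hat y_1\to -\infty$.

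At $\epsilon=0$ in the region $\hat y_1>0$ the system \eqref{eq:haty2} reduces to $\hat y_1'=w_1,\ w_1'=b(\theta_0,\phi_0)<0$, so orbits are downward-opening parabolas that necessarily re-enter $\hat y_1=0$ from above in finite fast time with $w_1<0$. Hence every orbit from \eqref{eq:hat1ICs} crosses into $\hat y_1<0$ in $\mathcal O(1)$ fast time with $w_1<0$, at which moment $\hat F_{N,1}=-\delta w_1>0$ and the grey region is entered. Inside the grey region the dynamics is linear with critical point $(b/p_+,0)$ and eigenvalues $\lambda_\pm(\theta_0)$ from \eqref{eq:lambdapm}, with $\lambda_-<0<\lambda_+$. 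Solving explicitly, any solution takes the form $(\hat y_1-b/p_+,w_1) = A_+ v_+ e^{\lambda_+\tau}+A_- v_- e^{\lambda_-\tau}$ with $v_\pm=(1,\lambda_\pm)^T$, so that $w_1 = \lambda_+(\hat y_1-b/p_+) + \mathcal O(e^{\lambda_-\tau})$ as $\tau\to\infty$; setting $\hat y_1=-\nu^{-1}$ gives $w_{1c}(\nu) = -\lambda_+\nu^{-1}(1+o(1))$ as $\nu\to 0$, the asymptotic formula claimed.

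Along this expanding direction $\hat y_1-b/p_+\sim A_+ e^{\lambda_+\tau}$, so the fast time to reach $\Lambda_1$ is $\tau_{\text{reach}}=\mathcal O(\ln\nu^{-1})$, bounded uniformly in $\epsilon$ since $\nu$ is fixed. For $\epsilon>0$, the slow equations in \eqref{eq:haty2} give $\theta',\phi',v'=\mathcal O(\epsilon)$ in fast time, so over $\tau\le\tau_{\text{reach}}$ the slow variables drift by $\mathcal O(\epsilon\ln\nu^{-1})=\mathcal O(\epsilon)$. A Gronwall estimate applied piecewise on the two smooth pieces of the orbit (inside $\hat y_1>0$ and inside $\hat y_1<0,\ \hat F_{N,1}>0$), combined with continuous dependence across the transversal crossing $\hat y_1=0$, then yields the $+\mathcal O(\epsilon)$ corrections to $(w_1,\theta,\phi,v)$ at $\Lambda_1$ asserted in \eqref{eq:z1}.

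The main obstacle I anticipate is that the vector field is only Lipschitz across the switching manifold $\hat y_1=0$, so classical Fenichel arguments do not apply globally. However, the crossing is transversal ($w_1\ne 0$ there), so it is regular and a flow-box argument provides $\epsilon$-uniform control on either side. A secondary technicality is to verify that the orbit never leaves the grey region once it has entered; a direct calculation on $\gamma_1^u$ gives $\hat F_{N,1}=-\hat y_1(1+\delta\lambda_+)+\delta\lambda_+ b/p_+>0$ for $\hat y_1\le 0$ (using $1+\delta\lambda_+>0$ from \eqref{eq:lambdapm2} and $b/p_+>0$ since $b<0,\ p_+<0$), and since solutions converge exponentially onto $\gamma_1^u$ they remain in the grey region up to exponentially small corrections.
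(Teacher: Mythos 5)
Your proposal is correct and follows essentially the same route as the paper: analyse the layer problem at $\epsilon=0$ (parabolic re-entry into $\hat y_1\le 0$ for $w_1>0$, then the explicit linear saddle flow in the region $\hat F_{N,1}>0$ aligning exponentially with $\gamma_1^u$, whose asymptotics give $w_{1c}(\nu)=-\lambda_+\nu^{-1}(1+o(1))$), followed by regular perturbation over the $\mathcal O(\ln \nu^{-1})$ transition time to obtain the $\mathcal O(\epsilon)$ corrections. The only cosmetic difference is that you justify forward invariance of the grey region via positivity of $\hat F_{N,1}$ along $\gamma_1^u$ plus exponential attraction (which is mildly circular, since the linear flow only governs the orbit while it stays in that region), whereas the direct observation that $w_1'<0$ there, so $w_1$ stays negative and $\hat y_1$ strictly decreases, closes this immediately.
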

\begin{proof}
 Consider the layer problem \eqref{eq:haty2}$_{\epsilon=0}$. Then $\theta(\tau)=\theta_0,\,\phi(\tau)=\phi_0,\,v(\tau)=v_0$. {Since $b<0$, initial conditions \eqref{eq:hat1ICs} with $w_1>0$ return to $\hat y_1=0$ with $w_1<0$, see \figref{g}(a).} Therefore we consider $w_1(0)\le 0$ subsequently. Now we solve \eqref{eq:haty2}$_{\hat F_{N,1}=-\hat y_1-\delta w_1,\,\epsilon=0}$ to find
 \begin{align}
 \hat y_1(\tau) &= \frac{b(\theta_0,\phi_0)}{p_+(\theta_0)}+k_+ e^{\lambda_+ \tau} +k_- e^{\lambda_-\tau},\eqlab{eq:haty2Sol}\\
 w_1(\tau)&=k_+ \lambda_+ e^{\lambda_+ \tau} +k_- \lambda_-e^{\lambda_-\tau}.\nonumber
\end{align}
Here $k_+ = \frac{\lambda_- b/p_+-w_1(0)}{\lambda_+-\lambda_-}<0, k_- = \frac{w_1(0)-\lambda_+ b/p_+}{\lambda_+-\lambda_-}<0$ depend on initial conditions. 
Since $\lambda_-<0<\lambda_+$ the solution remains within $\hat F_{N,1}>0$ for $\tau>0$ and contracts towards $\gamma_1^u(\theta_0,\phi_0,v_0)$ as $\tau\rightarrow \infty$. Therefore there exists a time $\tau_{c}=\tau_{c}(\nu)>0$ such that  $\Lambda_1$ is reached. Then at $\tau=\tau_{c}$, we have
 $z_{1}(0) = (-\nu^{-1},w_{1c},\theta_0,\phi_0,v_0),$
where
  $w_{1c}(\nu)\equiv w_1(\tau_{c}) = k_+ \lambda_+ e^{\lambda_+ \tau_{c}} +k_- \lambda_-e^{\lambda_-\tau_{c}}=-\lambda_+ \nu^{-1}(1+o(1))<0$.
 To obtain $z_1(\epsilon)$ we apply regular perturbation theory and the implicit function theorem using transversality to $\Lambda_1$ for $\epsilon=0$.
\end{proof}

For $\epsilon>0$ the variables $(\phi,v)$ will vary by $\mathcal O(1)$-amount as $\hat y_1,w_1\rightarrow-\infty$. But the variables $(\phi,v)$ are fast in \eqref{eq:haty} and slow in \eqref{eq:haty2}. To describe this transition we change to chart $\kappa_2$.

\subsubsection{Chart $\kappa_2$}\seclab{sec:kappa2}
Writing the non-standard slow-fast PWS system \eqref{eq:haty}$_{\hat F_N=\left[-\hat y-\delta w\right]}$ in chart $\kappa_2$, given by \eqref{eq:kappa1}, gives the following smooth (as anticipated by \eqref{CbarCalc}) system
\begin{align}
\epsilon_2' &=\epsilon_2 w_2,\eqlab{eq:eqnK1}\\
  w_2' &=\epsilon_2 b(\theta,\phi) +p_+(\theta) \left(1-\delta w_2\right)+w_2^2,\nonumber\\
  \theta'&=\epsilon \phi,\nonumber\\
  \phi'&=c_+(\theta)r_2\left(1-\delta w_2\right),\nonumber\\
  v' &=\epsilon a(\theta,\phi)+q_+(\theta)r_2 \left(1-\delta w_2\right),\nonumber\\
  r_2'&=-r_2w_2,\nonumber
 \end{align}
  on the box
   $U_2 = \{(\epsilon_2,w_2,\theta,\phi,v,r_2)\vert \epsilon_2\in [0,\nu],\,w_2\in [-\lambda_+-\rho,-\lambda_++\rho],\,r_2\in [0,\nu]\},$
 for $\rho>0$ sufficiently small (so that $w_2<\delta^{-1}$) and $\nu$ as above. Notice that $z_1(\epsilon)$ from \eqref{eq:z1} in chart $\kappa_2$ becomes
\begin{align}
 z_2(\epsilon) \equiv \kappa_{21}(z_1(\epsilon)):\quad r_2 = \epsilon \nu^{-1},\quad w_2 = w_{1c}\nu +\mathcal O(\epsilon),\quad \epsilon_2 = \nu,\eqlab{z2Eps}
\end{align}
using \eqref{eq:kappa21}. 
 Clearly $z_2(\epsilon)\in \kappa_{21}(\Lambda_1)\subset U_2$, $\kappa_{21}(\Lambda_1)$ being the face of the box $U_2$ with $\epsilon_2=\nu$. In this section we will for simplicity write subsets such as $\{(\epsilon_2,w_2,\theta,\phi,v,r_2)\in U_2\vert\cdots\}$ by $\{U_2\vert \cdots\}$.
  \begin{lemma}\lemmalab{lemma2}
 The set 
 $M_2=\{U_2\vert \quad r_2=0,\,\epsilon_2=0, \, w_2=-\lambda_+\}$
 is a set of critical points of \eqref{eq:eqnK1}. 
 Linearization around $M_2$ gives only three non-zero eigenvalues
 $ -\lambda_+<0,\,\lambda_--\lambda_+<0,\,\lambda_+>0,$
 and so $M_2$ is of saddle-type. The stable manifold is $W^s(M_2)=\{U_2\vert r_2=0\}$ while the unstable manifold is $W^u(M_2)=\{U_2\vert \epsilon_2=0,\,w_2=-\lambda_+\}$.
In particular, the $1D$ unstable manifold $\gamma^u_{2}(\theta_0,\phi_0,v_0)\subset W^u(M_2)$ of the base point $(\epsilon_2,w_2,\theta,\phi,v,r_2)= (0,0,\theta_0,\phi_0,v_0,0)\in M_2$ is given by 
 \begin{align}
 \gamma^u_{2}(\theta_0,\phi_0,v_0) =\bigg\{U_2\vert \quad w_2&= -\lambda_+(\theta_0),\quad \theta =\theta_0,\quad \phi = \phi_0- \frac{c_+(\theta_0)}{p_+(\theta_0)} \lambda_+(\theta_0)r_2,\eqlab{eq:gammau2}\\ v &= v_0- \frac{q_+(\theta_0)}{p_+(\theta_0)} \lambda_+(\theta_0)r_2,\quad r_2\ge 0,\quad \epsilon_2 =0\nonumber\bigg\}.
 \end{align}
\end{lemma}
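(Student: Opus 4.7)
The plan is to verify the four claims in the lemma (that $M_2$ is a critical set, the computed eigenvalues, the identification of $W^{s,u}(M_2)$, and the explicit form of $\gamma_2^u$) by direct computation, making repeated use of the identity $\lambda_+^2 = -p_+(\theta)(1+\delta\lambda_+)$ from \eqref{eq:lambdapm2}. Because chart $\kappa_2$ was engineered so that the PWS bracket disappears on the relevant region (see \eqref{CbarCalc}), the system \eqref{eq:eqnK1} is smooth in $U_2$ and the usual center-manifold / normally hyperbolic invariant manifold machinery applies.

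First I would substitute $\epsilon_2 = r_2 = 0$, $w_2 = -\lambda_+(\theta)$ into \eqref{eq:eqnK1}. Every right-hand side other than $w_2'$ carries an explicit factor of $\epsilon_2$ or $r_2$ (using $\epsilon = r_2\epsilon_2$) and therefore vanishes, while the $w_2'$ equation collapses to $p_+(1+\delta\lambda_+)+\lambda_+^2 = 0$ by \eqref{eq:lambdapm2}; hence $M_2$ consists of equilibria. Next I would compute the Jacobian at a point of $M_2$, ordering the variables $(\epsilon_2, w_2, \theta, \phi, v, r_2)$. A direct computation yields the non-trivial entries $\partial_{\epsilon_2}\epsilon_2' = -\lambda_+$, $\partial_{\epsilon_2}w_2' = b$, $\partial_{w_2}w_2' = -\delta p_+ - 2\lambda_+$, $\partial_\theta w_2' = p_+'(1+\delta\lambda_+)$, $\partial_{r_2}\phi' = c_+(1+\delta\lambda_+)$, $\partial_{r_2}v' = q_+(1+\delta\lambda_+)$, $\partial_{r_2}r_2' = \lambda_+$, with the $\theta$, $\phi$, and $v$ rows vanishing identically. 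Since $\lambda_+ + \lambda_- = -\delta p_+$ by Vieta, $-\delta p_+ - 2\lambda_+ = \lambda_- - \lambda_+$. Expanding along the zero columns shows the characteristic polynomial is $\mu^3(\mu+\lambda_+)(\mu-(\lambda_--\lambda_+))(\mu-\lambda_+)$, giving the three claimed non-zero eigenvalues plus a triple zero whose generalised eigenspace is exactly $T M_2 = \mathrm{span}(\partial_\theta, \partial_\phi, \partial_v)$. Since $-\lambda_+, \lambda_--\lambda_+ < 0 < \lambda_+$, $M_2$ is normally hyperbolic of saddle type.

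I would then verify the two candidate invariant manifolds by direct substitution. On $\{r_2 = 0\}$ one has $r_2' = 0$ and $\theta' = \phi' = v' = 0$, so the set is invariant; the reduced 2D flow in $(\epsilon_2, w_2)$ has $(0, -\lambda_+)$ as a stable node with eigenvalues $-\lambda_+$ and $\lambda_--\lambda_+$, so $\{r_2 = 0\} = W^s(M_2)$ (dimensions $3+2=5$). On $\{\epsilon_2 = 0, w_2 = -\lambda_+(\theta)\}$: invariance of the $w_2$ slice relies on $\theta' = r_2\epsilon_2\phi = 0$ freezing $\theta$ (so $\lambda_+(\theta)$ is constant along orbits) together with \eqref{eq:lambdapm2}; dimensions match ($3+1=4$) and this set is $W^u(M_2)$.

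Finally, to derive the explicit formula \eqref{eq:gammau2}, I would integrate the reduced dynamics on $W^u(M_2)$. With $\theta \equiv \theta_0$, the remaining equations are $r_2' = \lambda_+ r_2$, $\phi' = c_+(\theta_0)(1+\delta\lambda_+)r_2$, and $v' = q_+(\theta_0)(1+\delta\lambda_+)r_2$. Writing $r_2 = r_{20}e^{\lambda_+\tau}$ and eliminating $\tau$ yields $\phi - \phi_0 = (c_+(1+\delta\lambda_+)/\lambda_+)\,r_2$ and similarly for $v$; applying \eqref{eq:lambdapm2} in the rearranged form $(1+\delta\lambda_+)/\lambda_+ = -\lambda_+/p_+$ converts these expressions directly into \eqref{eq:gammau2}. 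The only genuinely delicate step — and the main obstacle — is this last identity, which is essential for displaying the coefficients of $r_2$ in the normalisation required by the subsequent matching. Once the invariant structure of $M_2$ is in place, everything else is routine linear algebra and explicit integration, a reflection of the fact that chart $\kappa_2$ was set up precisely to render $M_2$ normally hyperbolic.
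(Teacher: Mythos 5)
Your proof is correct and fills in, by direct computation, exactly the steps the paper's proof leaves as ``straightforward calculation'' plus the restriction to the invariant set $\{\epsilon_2=0,\,w_2=-\lambda_+\}$ and integration of the reduced system; the key identities $\lambda_++\lambda_-=-\delta p_+$ and $(1+\delta\lambda_+)/\lambda_+=-\lambda_+/p_+$ are used just as needed. (Only a cosmetic slip: the $\phi$- and $v$-rows of the Jacobian do not vanish identically --- they contain the $r_2$-column entries you list --- but your characteristic polynomial and conclusions are unaffected.)
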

\begin{proof}
 The first two statements follow from straightforward calculation. For $\gamma^u_{2}(\theta_0,\phi_0,v_0)$, we restrict to the invariant set: $\epsilon_2=0$, $w_2=-\lambda_+$ and solve the resulting reduced system. 
\end{proof}
\begin{remark}
 Notice that the set $\gamma^u_{2}(\theta_0,\phi_0,v_0)$ is just $\gamma^u(\theta_0,\phi_0,v_0)$ in \eqref{eq:unstableManifoldHaty} written in chart $\kappa_2$ for $\epsilon_2=0$. 
\end{remark}

Notice that $z_2(0)\subset W^s(M_2)$. 
The forward flow of $z_2(0)$ is described for $\tau\ge \tau_{c}$ by writing solution \eqref{eq:haty2Sol} to the layer problem \eqref{eq:haty2}$_{\epsilon=0}$ in chart $\kappa_2$ using $\kappa_{21}$, to get
\begin{align}
 \epsilon_2(\tau) &= -\left(\frac{b(\theta_0,\phi_0)}{p_+(\theta_0)}+k_+ e^{\lambda_+ \tau} +k_- e^{\lambda_-\tau}\right)^{-1}\nonumber\\
 &=-k_+^{-1}e^{-\lambda_+ \tau}(1+\mathcal O(e^{-\lambda_+\tau} + e^{(\lambda_--\lambda_+)\tau})),\quad \tau\rightarrow \infty\eqlab{eq:eos2w2Sol}\\
 w_2(\tau)&=-\left(k_+ \lambda_+ e^{\lambda_+ \tau} +k_- \lambda_-e^{\lambda_-\tau}\right)\left(\frac{b(\theta_0,\phi_0)}{p_+(\theta_0)}+k_+ e^{\lambda_+ \tau} +k_- e^{\lambda_-\tau}\right)^{-1} \nonumber\\
 &= -\lambda_+ (1+\mathcal O(e^{-\lambda_+\tau} + e^{(\lambda_--\lambda_+)\tau})), \quad \tau\rightarrow \infty.\nonumber
\end{align}
In the subsequent lemma we follow $z_2(\epsilon)\subset \{\epsilon_2=\nu\}$ up until $r_2=\nu$, with $\nu$ sufficiently small, by applying Fenichel's normal form theory. 
\begin{lemma}\lemmalab{finalcompress}
Let $c\in (0,1)$ and set $\Lambda_2 = \{U_2\vert r_2=\nu\}$. Then for $\nu$ and $\rho$ sufficiently small, the forward flow of $z_2(\epsilon)$ in \eqref{z2Eps} intersects $\Lambda_2$ in
 \begin{align}
  \bigg\{\Lambda_2\vert \quad w_2&=-\lambda_++\mathcal O(\epsilon^c),\quad \theta=\theta_0+\mathcal O(\epsilon \ln \epsilon^{-1}), \quad \phi =\phi_0- \frac{c_+(\theta_0)}{p_+(\theta_0)}\lambda_+(\theta_0)\nu+ \mathcal O(\epsilon^c),\nonumber\\
  v&=v_0- \frac{q_+(\theta_0)}{p_+(\theta_0)}\lambda_+(\theta_0)\nu+ \mathcal O(\epsilon^c)\bigg\}. \eqlab{eq:eos2w2SolNew}
 \end{align}
%
 as $\epsilon\rightarrow 0$.
  
\end{lemma}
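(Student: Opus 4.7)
The plan is to analyze the smooth system \eqref{eq:eqnK1} as a saddle-crossing passage near the critical manifold $M_2$ identified in \lemmaref{lemma2}. Two geometric facts are decisive: first, $\epsilon = r_2 \epsilon_2$ is a conserved quantity along any trajectory (since $\epsilon_2' = \epsilon_2 w_2$ and $r_2' = -r_2 w_2$, equivalently $\epsilon' = 0$); second, the invariant manifolds $W^s(M_2) = \{r_2 = 0\}$ and $W^u(M_2) = \{\epsilon_2 = 0,\, w_2 = -\lambda_+\}$ are already straight in these coordinates. The entry point $z_2(\epsilon)$ lies in $\{\epsilon_2 = \nu\} \subset W^s(M_2) + \mathcal{O}(\epsilon)$, with $r_2 = \epsilon\nu^{-1}$; the conservation law immediately forces $\epsilon_2 = \epsilon\nu^{-1}$ at the exit $\Lambda_2 = \{r_2 = \nu\}$, so both endpoints sit a distance $\mathcal{O}(\epsilon)$ from $M_2$ in their respective stable/unstable directions.

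Next I would estimate the passage time. Since $w_2$ stays close to $-\lambda_+$ during the passage, $r_2$ grows like $\epsilon\nu^{-1} e^{\lambda_+ \tau}$ and $\epsilon_2$ decays like $\nu e^{-\lambda_+ \tau}$, so the exit time is $T = \lambda_+^{-1}\ln(\nu^2/\epsilon) = \mathcal{O}(\ln\epsilon^{-1})$. From $\theta' = \epsilon\phi$ this gives $\theta = \theta_0 + \mathcal{O}(\epsilon \ln \epsilon^{-1})$ directly. For $\phi$ and $v$, I would integrate their equations along the trajectory, using \eqref{eq:lambdapm2} to rewrite $1 - \delta w_2 = -\lambda_+^2/p_+ + \mathcal{O}(\epsilon^c)$ and using $\int_0^T r_2 \, d\tau = (\nu - \epsilon\nu^{-1})/\lambda_+$, which yields the asserted leading constants $-\frac{c_+(\theta_0)}{p_+(\theta_0)}\lambda_+(\theta_0)\nu$ and $-\frac{q_+(\theta_0)}{p_+(\theta_0)}\lambda_+(\theta_0)\nu$, matching the restriction of $\gamma^u_2(\theta_0,\phi_0,v_0)$ from \eqref{eq:gammau2} to $r_2 = \nu$.

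The third and central step is the $\mathcal{O}(\epsilon^c)$ closeness to $\gamma^u_2$ at the exit. I would apply Fenichel's normal form theorem near $M_2$, which produces smooth coordinates in which the hyperbolic directions decouple from the (slow) center directions $(\theta,\phi,v)$ up to exponentially small error. In these coordinates the passage is a standard saddle crossing: incoming strong-stable coordinate $\sim \epsilon$, outgoing unstable coordinate controlled by the conservation law, and the $w_2$-component contracts toward $-\lambda_+$ at rate $\lambda_+ - \lambda_- > 0$. A Shilnikov-type matching then shows that the exit values of $w_2$, $\phi$ and $v$ lie within $\mathcal{O}(\epsilon^c)$ of those on $\gamma^u_2$ for any $c \in (0,1)$.

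The main obstacle is justifying the $\mathcal{O}(\epsilon^c)$ bound uniformly under the slow drift of the center base $(\theta,\phi,v)$, which itself moves by $\mathcal{O}(\epsilon \ln \epsilon^{-1})$ during passage; the exponent $c < 1$ (rather than $c = 1$) reflects precisely the competition between hyperbolic contraction and this slow drift. I would handle this either by invoking a standard saddle-passage lemma for slow--fast systems in Fenichel normal form, or by a direct Gronwall estimate after the straightening, carefully tracking how the perturbed $M_2$ and its invariant manifolds deform with $\theta$.
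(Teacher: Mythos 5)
Your proposal is correct and follows essentially the same route as the paper: passage past the saddle $M_2$ using the conserved quantity $\epsilon=r_2\epsilon_2$, the logarithmic transition time $T=\mathcal O(\ln\epsilon^{-1})$, and Fenichel's normal form near $M_2$ to control the center directions. The only real difference is bookkeeping: the paper's \lemmaref{straight} straightens $(\phi,v)$ into exact constants of motion $(\tilde\phi,\tilde v)$ and reads the exit values off the inverse transformation at $r_2=\nu$, $w_2=-\lambda_++\mathcal O(\epsilon^c)$, whereas you integrate $\phi'$ and $v'$ directly along the passage (where the pointwise deviation $w_2+\lambda_+$ is only $o_\nu(1)$ near entry, but is harmless once weighted by the then-tiny $r_2$) and bound the error afterwards --- both yield the same leading constants and the same $\mathcal O(\epsilon^c)$ remainder.
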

\begin{proof}
By Fenichel's normal form theory we can make the slow variables independent of the fast variables $(\epsilon_2,w_2,r_2)$:
\begin{lemma}\lemmalab{straight}
For $\nu$ and $\rho$ sufficiently small, then within $U_2$ there exists a smooth transformation $(\epsilon_2,w_2,\phi,v,r_2)\mapsto (\tilde \phi,\tilde v)$ satisfying
\begin{align}
\tilde \phi &=\phi+\frac{c_+(\theta)}{p_+(\theta)}\lambda_+(\theta)r_2+ \mathcal O(r_2 (w_2+\lambda_+)),\eqlab{eq:Tilde}\\
\tilde v&=v+\frac{q_+(\theta)}{p_+(\theta)}\lambda_+(\theta)r_2+ \mathcal O(r_2 (w_2+\lambda_+)+\epsilon),\nonumber
 \end{align}
 which transforms \eqref{eq:eqnK1} into
\begin{align}
  {\epsilon}_2' &={\epsilon}_2 w_2,\eqlab{eq:eqnK1Tilde}\\
  w_2' &=\epsilon_2 b(\theta,\tilde \phi) +p_+(\theta) \left(1-\delta w_2\right)+w_2^2+\mathcal O(\epsilon),\nonumber\\
  \theta' &=\epsilon \tilde \phi,\nonumber\\
 {\tilde \phi}'&=0,\nonumber\\
 {\tilde v}' &=0,\nonumber\\
 {r}_2'&=-r_2w_2.\nonumber
 \end{align}
 \end{lemma}
\begin{proof}
Replace $r_2$ by $\nu r_2$ in \eqref{eq:eqnK1} and consider $\nu$ small. Then $\epsilon_2=r_2=0$, $w_2=-\lambda_+$ is a saddle-type slow manifold for $\nu$ small. The result then follows from Fenichel's normal form theory \cite{jones_1995}. Using $\phi=\tilde \phi+\mathcal O(r_2)$ together with $r_2\epsilon_2=\epsilon$ in the $w_2$-equation then gives the desired result.
\end{proof}
To prove \lemmaref{finalcompress} we then integrate the normal form \eqref{eq:eqnK1Tilde} with initial conditions $z_2(\epsilon)$ from \eqref{z2Eps} from (a reset) time $\tau=0$ up to $\tau=T$, defined implicitly by $r_2(T)=\nu$. 
Clearly 
 $\theta(T) =\theta_0+\mathcal O(\epsilon T)$,
$ \tilde \phi(T)=\tilde \phi_0$,
$ \tilde v(T)=\tilde v_0$.
Then, from \eqref{eq:eos2w2Sol}, Gronwall's inequality and the fact that $1-\lambda_-\lambda_+^{-1}>1$, we find
\begin{align}
T&=\lambda_+^{-1}\ln \epsilon^{-1} (1+o(1)) \eqlab{eq:TT}\\
 \epsilon_2(T) &= \epsilon \nu^{-1} \nonumber\\
 w_2(T)&=-\lambda_+ (1+\mathcal O(e^{-\lambda_+T} + e^{(\lambda_--\lambda_+)T}+\epsilon)) = -\lambda_+ +\mathcal O(\epsilon^{c(1-\lambda_-\lambda_+^{-1})} +\epsilon^{c})=-\lambda_++\mathcal O(\epsilon^{c}),
\end{align}
for $c\in (0,1)$. Then we obtain the expressions for $\theta=\theta(T)$, $\phi=\phi(T)$ and $v=v(T)$ in \eqref{eq:eos2w2SolNew} from \eqref{eq:Tilde} 
in terms of the original variables. 
\end{proof}

\subsubsection{Completing the proof of \propref{prop:slip}}

To complete the proof of \propref{prop:slip} we then return to \eqref{eq:haty} using \eqref{eq:kappa1} and integrate initial conditions \eqref{eq:eos2w2SolNew} within $\{\hat y=-r_2=-\nu\}$, up to $\Pi:\,v=0$ given in \eqref{eq:Pi2}, using regular perturbation theory and the implicit function theorem. This gives \eqref{eq:z2EpsPi2}
which completes the proof of \propref{prop:slip}.

\subsection{Sticking}\seclab{sec:stick}
After the slipping compression phase of the previous section, the rod then sticks on the sliding manifold $\Pi$ given in \eqref{eq:Pi2}, with $(\hat y,w,\theta,\phi)$ given by \eqref{eq:z2EpsPi2}. This is a corollary of the following lemma:
\begin{lemma}\lemmalab{lemma:nonStick}
 Suppose $a\ne 0$, $q_+<0$, $q_->0$. Consider the (negative) function
 \begin{align*}
  \mathcal F(\theta,\phi) =  \left\{\begin{array}{cc}
                                                                   \frac{a(\theta,\phi)}{q_+(\theta)}\quad \text{if}\quad a>0,\\
                                                                   \frac{a(\theta,\phi)}{q_-(\theta)}\quad \text{if}\quad a<0.
                                                                  \end{array}\right.
                                                                  \end{align*}
 Then there exists a set of visible folds at:
 \begin{align}
 \Gamma_\epsilon \equiv \{(\hat y,w,\theta,\phi,v)\in \Pi\vert \quad \hat y + \delta w =\epsilon \mathcal F(\theta,\phi)\},\eqlab{vFold}
 \end{align}
 of the Filippov system \eqref{eq:haty},
dividing the switching manifold $\Pi:\,v=0$ into (stable) sticking:
 $\Pi_s\equiv \{(\hat y,w,\theta,\phi,v)\in \Pi\vert \quad \hat y + \delta w <\epsilon \mathcal F(\theta,\phi)\},$
and crossing upwards (downwards) for $a>0$ ($a<0$):
$\Pi_c\equiv \{(\hat y,w,\theta,\phi,v)\in \Pi\vert \quad \hat y + \delta w>\epsilon \mathcal F(\theta,\phi)\}.$ 
\end{lemma}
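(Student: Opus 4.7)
The plan is to apply Filippov's sliding/crossing dichotomy on the switching manifold $\Pi=\{v=0\}$ of the Filippov system \eqref{eq:haty}, and then identify its boundary with a set of visible folds. The two branches of \eqref{eq:haty} differ in the sign choices $q_{\pm}$, $p_{\pm}$, $c_{\pm}$, but on $\Pi$ the only component of the vector field normal to $\Pi$ is the $v$-component, so the Filippov classification is controlled entirely by the two one-sided values $v'\vert_\pm=\epsilon a(\theta,\phi)+q_{\pm}(\theta)\hat F_N(\hat y,w)$.

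First I would derive $\Pi_s$, $\Pi_c$, and $\Gamma_\epsilon$ as level sets. Sticking requires $v'\vert_+<0<v'\vert_-$. Using $q_+<0<q_-$ and $\hat F_N\ge 0$, the first inequality is automatic when $a\le 0$ and, for $a>0$, reduces to $\hat F_N>-\epsilon a/q_+$; symmetrically the second inequality is automatic when $a\ge 0$ and, for $a<0$, reduces to $\hat F_N>-\epsilon a/q_-$. Both cases collapse to $\hat F_N>-\epsilon \mathcal F(\theta,\phi)$. Since $\mathcal F<0$, this forces $\hat y+\delta w<\epsilon \mathcal F<0$, so we are automatically in the compliance region where $\hat F_N=-\hat y-\delta w$, and the sticking condition becomes exactly $\hat y+\delta w<\epsilon \mathcal F$, yielding $\Pi_s$. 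Reversing the inequality gives $\Pi_c$, and a direct check in both sub-cases $\hat F_N>0$ and $\hat F_N=0$ shows that $v'\vert_+$ and $v'\vert_-$ then share the sign of $a$, establishing the upward/downward crossing statement.

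On $\Gamma_\epsilon$ one has $\hat F_N=-\epsilon \mathcal F>0$, and precisely one of $v'\vert_\pm$ vanishes (the $+$ branch if $a>0$, the $-$ branch if $a<0$), so these are folds of the corresponding one-sided smooth vector field. For visibility I would differentiate $v'\vert_\pm$ once more along that branch, using $\hat F_N'=-w-\delta w'$ and $w'=b(\theta,\phi)+p_{\pm}(\theta)\hat F_N$, to obtain at leading order
\begin{align*}
v''\vert_\pm \;=\; -q_{\pm}(\theta)\bigl(w+\delta\bigl(b(\theta,\phi)+p_{\pm}(\theta)\hat F_N\bigr)\bigr)+\mathcal O(\epsilon).
\end{align*}
Nondegeneracy ($v''\vert_\pm\ne 0$) follows from $a\ne 0$ after algebraic simplification, and visibility amounts to the tangent orbit curving back into its own half-space.

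The main obstacle is the last step: certifying the correct sign of $v''\vert_\pm$ on $\Gamma_\epsilon$ so as to distinguish visibility from invisibility. Rather than tracking that sign directly through the leading-order expression above (which would require simultaneous sign control of $w$, $b$, and $\hat F_N$), the cleanest route is to invoke the geometric fact that $\Gamma_\epsilon$ is precisely the boundary between the region $\Pi_s$, where both branches point toward $\Pi$, and the region $\Pi_c$, where both branches point to the same side of $\Pi$: by continuity of $v'\vert_\pm$, the tangent branch must curve away from $\Pi$ as one crosses $\Gamma_\epsilon$ from $\Pi_s$ into $\Pi_c$, which is exactly the definition of a visible fold.
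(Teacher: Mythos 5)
Your derivation of the partition of $\Pi$ is correct and is exactly the ``simple computation following Filippov'' that the paper's one-line proof invokes: the classification is governed solely by the one-sided normal components $v'\vert_\pm=\epsilon a+q_\pm\hat F_N$, and your case analysis using $q_+<0<q_-$ correctly collapses both sub-cases to $\hat F_N>-\epsilon\mathcal F$, hence $\hat y+\delta w<\epsilon\mathcal F$ for sticking, with the stated crossing directions on the complement. The only soft spot is the visibility step. Your continuity argument silently assumes that the tangent orbit at the fold crosses $\Gamma_\epsilon$ from $\Pi_s$ into $\Pi_c$; since the fold orbit is tangent to $\Pi$, one has $v''\vert_\pm=D_{X_\pm}(v'\vert_\pm)$ with $X_\pm\in T\Pi$, so the sign of $v''\vert_\pm$ is the sign of $v'\vert_\pm$'s tangential variation \emph{in the direction the orbit actually moves}, not merely across $\Gamma_\epsilon$. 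Concretely, on $\Gamma_\epsilon$ one finds $v''\vert_\pm=-q_\pm w+\mathcal O(\epsilon)$, so the fold is visible precisely where $w>0$ (and invisible where $w<0$); this is harmless for the paper's use of the lemma, since during sticking $\hat y''=w'>0$ forces $w>0$ at the exit point, but your closing paragraph as written does not establish it.
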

\begin{proof}
Simple computations, following \cite{filippov1988differential}; see also \propref{lem:Filippov}.
\end{proof}

%
The forward motion of \eqref{eq:z2EpsPi2} within $\Pi_s\subset \Pi$ for $\epsilon\ll 1$ is therefore subsequently described by the Filippov vector-field \eqref{eq:Filippov} in \propref{lem:Filippov}
   \begin{align}
 {\hat y}'&=w,\eqlab{eq:hatStick}\\
  w' &=\epsilon b(\theta,\phi)+S_w(\theta) \left[-\hat y-\delta w\right],\nonumber\\
  \theta' &=\epsilon \phi,\nonumber\\
  \phi' &=S_\phi(\theta)\left[-\hat y-\delta w\right],\nonumber
 \end{align}
here written in terms of $\hat y$ and the fast time $\tau$, until sticking ends at the visible fold $\Gamma_\epsilon$.  Note this always occurs for $0<\epsilon\ll 1$ since 
${\hat y}''=w' >0$,
for $\left[-\hat y-\delta w\right]>0$.
 
We first focus on $\epsilon=0$. From \eqref{eq:hatStick}, $\theta=\theta_0$, a constant, and
 \begin{align}
 {\hat y}'&=w,\eqlab{eq:stickEps0}\\
  w' &=S_w(\theta) \left[-\hat y-\delta w\right],\nonumber\\
  \phi' &=S_\phi(\theta)\left[-\hat y-\delta w\right].\nonumber
 \end{align}
 We now integrate \eqref{eq:stickEps0}, using \eqref{eq:z2EpsPi2} for $\epsilon=0$ as initial conditions, given by 
 \begin{align}
(\hat y(0),w(0),\phi(0)) = \left(-\frac{p_+(\theta_0)}{q_+(\theta_0) \lambda_+(\theta_0)}v_0,-\frac{p_+(\theta_0)}{q_+(\theta_0)} v_0,\phi_0-\frac{c_+(\theta_0)}{q_+(\theta_0)}v_0\right),\eqlab{eq:initStick}
   \end{align}
up until the section
 $\Gamma_0:\hat y +\delta w=0$
shown in \figref{fig:IWC}(a), where sticking ceases for $\epsilon=0$, by \lemmaref{lemma:nonStick} and \eqref{vFold}$_{\epsilon=0}$. 
%
%
We then obtain a function $e(\delta,\theta_0)>0$ in the following proposition, which relates the horizontal velocity at the start of the slipping compression phase $v_0$ \eqref{eq:ics} with the values of $(\hat y,w,\phi)$ on $\Gamma_0$, at the end of the sticking phase.
 \begin{proposition}\proplab{prop:stick}
 There exists a smooth function $e(\delta,\theta_0)>0$ and a time $\tau_s>0$ such that: .
 $(\hat y(\tau_s),w(\tau_s),\phi(\tau_s))\in \Gamma_0$ with
 \begin{align*}
   \hat y(\tau_s) &=-\delta e(\delta,\theta_0)v_0,\\
   w(\tau_s) &= e(\delta,\theta_0)v_0,\\
    \phi(\tau_s) &=\phi_0+\left\{-\frac{c_+(\theta_0)}{q_+(\theta_0)}+\frac{S_\phi(\theta_0)}{S_w(\theta_0)}\left(e(\delta,\theta_0)+\frac{p_+(\theta_0)}{q_+(\theta_0)}\right)\right\}v_0,
  \end{align*}
 where $(\hat y(\tau),w(\tau),\phi(\tau))$ is the solution of \eqref{eq:stickEps0} with initial conditions \eqref{eq:initStick}. 
 The function $e(\delta,\theta_0)$ 
is monotonic in $\delta$:
 $\partial_{\delta} e(\delta,\theta_0)<0$,
and satisfies \eqsref{eDeltaLarge}{eDeltaSmall} for $\delta\gg 1$ and $\delta \ll 1$, respectively.

 \end{proposition}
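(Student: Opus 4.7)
My plan is to observe that on the sticking region $\{\hat y + \delta w \le 0\}$ the bracket $[-\hat y-\delta w]$ in \eqref{eq:stickEps0} coincides with $-\hat y-\delta w$, so the $(\hat y,w)$-subsystem decouples from $\phi$ and reduces to the linear damped oscillator
\begin{equation*}
\ddot{\hat y} + S_w(\theta_0)\delta \,\dot{\hat y} + S_w(\theta_0)\hat y = 0,
\end{equation*}
with characteristic roots $\mu_\pm = \tfrac{1}{2}\bigl(-S_w\delta\pm\sqrt{S_w^2\delta^2-4S_w}\bigr)$ satisfying $\mu_+\mu_- = S_w$, $\mu_++\mu_- = -S_w\delta$, and hence the useful algebraic identity $(1+\delta\mu_+)(1+\delta\mu_-) = 1$. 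I would write $\hat y(\tau) = C_+ e^{\mu_+\tau}+C_- e^{\mu_-\tau}$, determine the constants $C_\pm$ from the initial data \eqref{eq:initStick}, and then study
\begin{equation*}
u(\tau) := \hat y(\tau)+\delta w(\tau) = C_+(1+\delta\mu_+)e^{\mu_+\tau}+C_-(1+\delta\mu_-)e^{\mu_-\tau};
\end{equation*}
using the identity \eqref{eq:lambdapm2} one checks that $u(0) = \lambda_+(\theta_0)v_0/q_+(\theta_0) < 0$, so the trajectory starts strictly inside $\Pi_s$.

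Next I would define $\tau_s > 0$ as the smallest positive root of $u(\tau) = 0$. Existence can be read off the closed form in both regimes: in the underdamped case $u$ is a damped sinusoid and changes sign within the first period, while in the overdamped case a direct sign count gives $C_+(1+\delta\mu_+) > 0$ and $C_-(1+\delta\mu_-) < 0$, so the two exponentials cancel at a unique finite time $\tau_s = \ln\bigl(-C_-(1+\delta\mu_-)/(C_+(1+\delta\mu_+))\bigr)/(\mu_+-\mu_-)$; smoothness of $\tau_s$ and hence of $e$ in $\delta$ then follows from the implicit function theorem once $u'(\tau_s) = w(\tau_s) \ne 0$ is verified. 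Setting $e(\delta,\theta_0):= w(\tau_s)/v_0$ and using $\hat y(\tau_s) = -\delta w(\tau_s)$ at $\tau = \tau_s$ produces the stated formulas for $\hat y(\tau_s)$ and $w(\tau_s)$. For $\phi$ I would exploit that $\phi' = (S_\phi/S_w)\,w'$ on $\Pi_s$, which integrates directly to
\begin{equation*}
\phi(\tau_s)-\phi(0) = \frac{S_\phi(\theta_0)}{S_w(\theta_0)}\bigl(w(\tau_s)-w(0)\bigr);
\end{equation*}
substituting the initial values from \eqref{eq:initStick} together with $w(\tau_s)=e(\delta,\theta_0)v_0$ reproduces the formula for $\phi(\tau_s)$ in the proposition.

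Finally, I would extract the asymptotic expansions from the closed-form expression for $e$. For $\delta\gg 1$ the expansions $\mu_+ = -1/\delta + O(\delta^{-3})$, $\mu_- = -S_w\delta + O(\delta^{-1})$ give $\tau_s = O(\delta^{-1}\ln\delta)$ and a leading $\delta^{-2}$ behaviour for $e$ that matches \eqref{eDeltaLarge}, after simplification via the identity $(p_+-S_w)/(q_+S_w) = (p_--p_+)/(q_-p_+-q_+p_-)$ which follows from \eqref{eq:Sw}. For $\delta\ll 1$ the motion is a slowly damped harmonic oscillation with angular frequency $\sqrt{S_w}$, so at $\delta=0$ the first zero of $u$ satisfies $\sqrt{S_w}\,\tau_s^{(0)} = \pi - \arctan\sqrt{-S_w/p_+}$, and energy conservation $w^2+S_w\hat y^2=\mathrm{const}$ at $\delta=0$ gives $e(0,\theta_0) = \sqrt{p_+(p_--p_+)/(q_+(q_--q_+))}$; a standard regular perturbation in $\delta$ then supplies the linear correction in \eqref{eDeltaSmall}. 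Monotonicity $\partial_\delta e < 0$ will follow by differentiating the implicit relation $u(\tau_s;\delta) = 0$ together with a direct calculation of $\partial_\delta w(\tau_s)$. The main obstacle is the algebraic bookkeeping required to convert the raw closed-form expression for $e$ in terms of $(\mu_\pm,C_\pm,p_\pm,q_\pm)$ into the clean forms stated in \eqref{eDeltaLarge} and \eqref{eDeltaSmall}; the key simplifications throughout are the quadratic identity $(1+\delta\mu_+)(1+\delta\mu_-) = 1$ and the representation of $S_w$ given in \eqref{eq:Sw}.
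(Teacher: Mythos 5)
Your proposal is correct and follows essentially the same route as the paper: solve the linear sticking system \eqref{eq:stickEps0} explicitly, locate the first positive zero of $\hat y+\delta w$ starting from $u(0)=\lambda_+v_0/q_+<0$, use $\phi'=(S_\phi/S_w)w'$ for the angular variable, and expand the eigenvalues $\xi_\pm$ (your $\mu_\pm$) in the overdamped and underdamped regimes, with the same algebraic identity relating $(p_+-S_w)/(q_+S_w)$ to the form in \eqref{eDeltaLarge}. The only material difference is the monotonicity $\partial_\delta e<0$, which the paper obtains by a phase-plane comparison (using that $w(0)$ is $\delta$-independent while $\hat y(0)$ varies monotonically through $\lambda_+(\delta)$) whereas you defer it to an uncomputed implicit differentiation of $u(\tau_s;\delta)=0$; that route does work (note $w'(\tau_s)=0$, so $\partial_\delta e=v_0^{-1}\partial_\delta w(\tau_s;\delta)$), but it is the one step you have not actually carried out.
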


\begin{proof}
The existence of $\tau_s$ is obvious. Linearity in $v_0$ follows from \eqref{eq:initStick} and the linearity of \eqref{eq:stickEps0}. Since $\dot{\hat y}=w$, we have $e>0$. The $\phi$-equation follows since 
 $\phi' = \frac{S_\phi(\theta)}{S_w(\theta)}w'$.
The monotonicity of $e$ as function $\delta$ is the consequence of simple arguments in the $(w,\hat y)$-plane using \eqref{eq:stickEps0} and the fact that $w(0)$ in \eqref{eq:initStick} is independent of $\delta$ while $\hat y(0)=\hat y_0(\delta)$ decreases (since $\lambda_+$ is an increasing function of $\delta$). 
To obtain the asymptotics we first solve \eqref{eq:stickEps0} with $\delta\ne \frac{2}{\sqrt{S_w(\theta_0)}}$. 
Simple calculations show that
\begin{align}
e(\delta,\theta_0) = \frac{\xi_+}{ \xi_-}\left(\lambda_+-\xi_-\right)\frac{p_+}{q_+\lambda_+} e^{\xi_+\tau_s},\eqlab{eq:eGeneral}
\end{align}
suppressing the dependency on $\theta_0$ on the right hand side, 
where 
  $\xi_\pm = -\frac{\delta S_w}{2}\pm \frac12 \sqrt{\delta^2 S_w^2-4S_w}$,
and $\tau_s$ is the least positive solution of
 $e^{(\xi_+-\xi_-) \tau_s} = \frac{\xi_-^2(\lambda_+ -\xi_+)}{\xi_+^2(\lambda_+-\xi_-)}$.
For $\delta\gg 1$ the eigenvalues $\xi_\pm$ are real and negative. Hence
 $\tau_s = \frac{1}{\xi_+-\xi_-}\ln \left(\frac{\xi_-^2(\lambda_+ -\xi_+)}{\xi_+^2(\lambda_+-\xi_-)}\right)$.
Now using 
 $\xi_+ = -S_w \delta(1+\mathcal O(\delta^{-2}),\quad \xi_- = \frac{S_w}{\xi_-} = -\delta^{-1}(1+\mathcal O(\delta^{-1})$,
and
$ \lambda_+ = -p_+\delta (1+\mathcal O(\delta^{-2})),$
we obtain
$ \xi_+ \tau_s = \mathcal O(\delta^{-2} \ln \delta^{-1}),$
and hence
\begin{align}
 e(\delta,\theta_0) = -\frac{S_w-p_+}{ q_+ S_w}\delta^{-2}\left(1+\mathcal O(\delta^{-2} \ln \delta^{-1})\right),\eqlab{eq:eLarge}
\end{align}
as $\delta\rightarrow \infty$. For $\delta\ll 1$, $\xi_\pm$ are complex conjugated with negative real part. This gives
 $\tau_s = \frac{2i}{\xi_+-\xi_-)} \left(\phi-\pi n\right),\, \phi = \text{arg}\,((\lambda_+-\xi_+)\xi_-^2)>0,\, n = \lfloor \phi/\pi\rfloor.$
Using the asymptotics of $\xi_\pm $ and $\lambda_+$ we obtain
 $\tau_s = \frac{\pi-\text{arctan}\left(\sqrt{-\frac{S_w}{p_+}}\right)}{\sqrt{S_w}}-\frac12 \delta(1+\mathcal O(\delta)),$
and then 
\begin{align}
 e(\delta,\theta_0) = -\frac{\sqrt{p_+(p_+-S_w)}}{q_+} \left(1-\frac{\sqrt{S_w}}{2}\left(\pi-\text{arctan}\left(\sqrt{-\frac{S_w}{p_+}}\right) \right)\delta+\mathcal O(\delta^2)\right),\eqlab{eq:eLow}
\end{align}
as $\delta\rightarrow 0^+$.
Simple algebraic manipulations of \eqsref{eq:eLarge}{eq:eLow} using \eqref{eq:Sw} give the expressions in \eqsref{eDeltaLarge}{eDeltaSmall}.

\end{proof}

\begin{remark}
 The critical value $\delta = \delta_{crit}(\theta_0) \equiv \frac{2}{\sqrt{S_w(\theta_0)}}$ gives a double root of the characteristic equation. Note that $\delta_{crit}(\frac{\pi}{2})=2$ for the classical Painlev\'e problem, as expected (see section \secref{sec:compliancebasedmethod}).
\end{remark}
For $0<\epsilon \ll 1$ sticking ends along the visible fold at $\Gamma_\epsilon$. We therefore perturb from $\epsilon=0$ as follows:
\begin{proposition}\proplab{prop:stickNew} 
 The forward flow of \eqref{eq:z2EpsPi2} under the Filippov vector-field \eqref{eq:hatStick} intersects the set of visible folds $\Gamma_\epsilon$ $o(1)$-close to the intersection of \eqref{eq:z2EpsPi2}$_{\epsilon=0}$ with $\Gamma_0$ described in \propref{prop:stick}.
\end{proposition}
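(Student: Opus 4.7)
The plan is to prove \propref{prop:stickNew} by a standard regular perturbation argument combined with the implicit function theorem. All the ingredients — the Filippov vector field \eqref{eq:hatStick}, the fold set $\Gamma_\epsilon$ from \lemmaref{lemma:nonStick}, and the initial data \eqref{eq:z2EpsPi2} — depend jointly continuously on $\epsilon$ and on the state along the unperturbed orbit of \propref{prop:stick}. The only care needed is to confirm that this orbit lies on the smooth branch of the Filippov system until it reaches $\Gamma_0$, and to check transversality at the exit point.

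First, I would verify that the unperturbed trajectory with initial condition \eqref{eq:initStick} and dynamics \eqref{eq:stickEps0} satisfies $-\hat y(\tau) - \delta w(\tau) > 0$ for $\tau \in [0, \tau_s)$, so that the bracket $[-\hat y - \delta w]$ is smooth throughout and the Filippov vector field \eqref{eq:hatStick} reduces to a linear ODE. Direct substitution of \eqref{eq:initStick} and use of \eqref{eq:lambdapm2} give
\[
 -\hat y(0) - \delta w(0) = \frac{p_+(\theta_0)(1 + \delta \lambda_+(\theta_0))}{q_+(\theta_0) \lambda_+(\theta_0)} v_0 = -\frac{\lambda_+(\theta_0)}{q_+(\theta_0)} v_0 > 0,
\]
using $q_+(\theta_0) < 0$, $\lambda_+(\theta_0) > 0$, $v_0 > 0$. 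The explicit linear solution constructed in the proof of \propref{prop:stick} then shows the bracket stays strictly positive on $[0, \tau_s)$ and vanishes for the first time precisely at $\tau_s$. Differentiating along \eqref{eq:stickEps0} with $\epsilon = 0$ yields
\[
 \frac{d}{d\tau}(\hat y + \delta w)\bigg|_{\tau_s} = w(\tau_s) + \delta S_w(\theta_0)\bigl(-\hat y(\tau_s) - \delta w(\tau_s)\bigr) = e(\delta, \theta_0) v_0 > 0,
\]
so the orbit leaves $\Pi_s$ transversally through $\Gamma_0$.

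Second, define
\[
 G(\tau, \epsilon) := \hat y(\tau; \epsilon) + \delta w(\tau; \epsilon) - \epsilon \mathcal F(\theta(\tau; \epsilon), \phi(\tau; \epsilon)),
\]
where $(\hat y, w, \theta, \phi)(\tau; \epsilon)$ is the solution of \eqref{eq:hatStick} starting from \eqref{eq:z2EpsPi2}. By continuous dependence of solutions of smooth ODEs on parameters and initial data, together with the $\mathcal O(\epsilon^c)$ convergence of \eqref{eq:z2EpsPi2} to \eqref{eq:initStick} recorded in \remref{here}, $G$ is continuous at $(\tau_s, 0)$ with $G(\tau_s, 0) = 0$ and $\partial_\tau G(\tau_s, 0) = e(\delta, \theta_0) v_0 > 0$. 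The implicit function theorem then yields a function $\tau_s(\epsilon) = \tau_s + o(1)$ solving $G(\tau_s(\epsilon), \epsilon) = 0$; substituting this time into the $\epsilon$-perturbed solution delivers the claimed $o(1)$-closeness of the intersection point on $\Gamma_\epsilon$ to the $\epsilon = 0$ intersection on $\Gamma_0$ described in \propref{prop:stick}. The ``main obstacle'' is essentially absent: the argument is regular (not singular) perturbation, because the entire sticking segment is bounded away from the non-smooth locus $\hat y + \delta w = 0$ except at its endpoint, where we exit transversally; the only minor care is tracking the $\mathcal O(\epsilon^c)$ corrections from \lemmaref{finalcompress}, which simply pass through as $o(1)$.
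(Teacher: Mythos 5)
Your proof is correct and follows essentially the same route as the paper's: the paper's own argument is precisely transversality of the $\epsilon=0$ orbit to $\Gamma_0$ (which you verify via $\frac{d}{d\tau}(\hat y+\delta w)\vert_{\tau_s}=w(\tau_s)=e(\delta,\theta_0)v_0>0$) combined with regular perturbation theory and the implicit function theorem to perturb $\tau_s$ to $\tau_s+o(1)$. Your additional checks (positivity of the bracket on $[0,\tau_s)$ and propagation of the $\mathcal O(\epsilon^c)$ initial-condition error) are correct and simply make explicit what the paper leaves implicit.
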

\begin{proof}
 Since the $\epsilon=0$ system is transverse to $\Gamma_0$ we can apply regular perturbation theory and the implicit function theorem to perturb $\tau_s$ continuously to $\tau_s+o(1)$. The result then follows.
\end{proof}

%


\subsection{Lift-off}\seclab{sec:lift}
Beyond $\Gamma_\epsilon$ we have $\hat F_N\equiv 0$ and lift-off occurs. For $\epsilon=0$ we have
${\hat y}' =w$
 and $w'= \theta '= \phi' =v'=0$. By \propref{prop:stickNew} and regular perturbation theory, we obtain the desired result in \thmref{thm:main}. In terms of the original (slow) time $t$, it follows that the time of IWC is of order $\mathcal O(\epsilon \ln \epsilon^{-1})$ (recall \eqref{eq:TT}). As $\epsilon\rightarrow 0$, IWC occurs instantaneously, as desired. 


\section{Proof of \thmref{cor}: IWC in the indeterminate case}\seclab{new}
Here, by assumption \eqref{eq:nonUnique}, we have $b>0$. Now $C_1\subset \{\hat y_1<0\}$, where $C_1$ is given by \eqref{eqC1}; see also \figref{g}(b). The stable manifold of $C_{1}\cap\{\theta=\theta_0,\,\phi=\phi_0,\,v=v_0\}$ is:
 \begin{align}\eqlab{eq:gammas1}
\gamma_1^s(\theta_0,\phi_0,v_0)= \bigg\{(\hat y_1,w_1,\theta_0,\phi_0,v_0)\vert \quad \hat y_1&=\frac{b(\theta_0,\phi_0)}{p_+(\theta_0)}+s,\quad w_1 =\lambda_-(\theta_0)s,\quad s\le -\frac{b(\theta_0,\phi_0)}{p_+(\theta_0)},\bigg\}
 \end{align}
 with $\lambda_-$ defined in \eqref{eq:lambdapm}. $\gamma_1^s$ intersects the $w_1$-axis in
 \begin{align}\eqlab{eq:w1star}
  \gamma_1^s\cap \{\hat y_1=0\}:\quad w_1 =w_{1*}\equiv -\lambda_-(\theta_0)\frac{b(\theta_0,\phi_0)}{p_+(\theta_0)}<0,
 \end{align}
 and divides the negative $w_1$-axis into (i) initial conditions that lift off directly ($w_{10}>w_{1*}$, blue in \figref{g}(b)) and (ii) initial conditions that undergo IWC before returning to $\hat y=0$ ($w_{10}<w_{1*}$, green in \figref{g}(b)). (A canard phenomenon occurs around $w_{10}=w_{1*}$ for $0<\epsilon\ll 1$ where the solution follows a saddle-type slow manifold for an extended period of time.) For $w_{10}<w_{1*}$ the remainder of the proof of \thmref{cor} on IWC in the indeterminante case then follows the proof of \thmref{thm:main} above.


\section{Discussion}\seclab{discussion}

The quantity $e(\delta,\theta_0)$ relates the initial horizontal velocity $v_0$ of the rod to the resulting vertical velocity at the end of IWC. It is like a ``horizontal coefficient of restitution''. The leading order expression of $e(\delta,\theta_0)$ in \eqref{eDeltaLarge}  for $\delta\gg 1$ is independent of $\mu$, in general. Using the expressions for $q_\pm$ and $p_\pm$ in \eqref{eq:coeffs}, together with \eqref{eq:eLarge}, we find for large $\delta$ that
 \begin{align}
  e(\delta,\theta_0) &=\frac{\alpha }{2(1+\alpha)}\sin (2\theta_0)\delta^{-2}\left(1+\mathcal O(\delta^{-2} \ln \delta^{-1})\right),\quad \theta_0\in (\theta_1,\theta_2).\eqlab{eq:eDeltaLarge}
 \end{align}
The limit $\delta\rightarrow \infty$ is not uniform in $\theta\in (\theta_1,\theta_2)$. 

The expression for $\delta\ll 1$ is more complicated and {\it does} depend upon $\mu$, in general. Using \eqref{eq:coeffs} and \eqref{eq:eLow}, for $\delta=0$, we have:
\begin{align}\eqlab{eq:eDeltaZero}
 e(0,\theta_0)= \sqrt{\frac{(1+\alpha\cos^2\theta_0-\mu\alpha\sin\theta_0\cos\theta_0)}{(\alpha\sin\theta_0\cos\theta_0-\mu(1+\alpha\sin^2\theta_0))}\frac{\alpha\sin\theta\cos\theta_0}{(1+\alpha\sin^2\theta_0)}}
\end{align}
We plot $e(0,\theta_0)$ in \figref{fig:eDelta0}(a) for $\alpha=3$ and $\mu=1.4$. \figref{fig:eDelta0}(b) shows the graph of $e(\delta, 1)$ and $e(\delta,1.2)$ along with the approximations (dashed lines) in \eqsref{eDeltaLarge}{eDeltaSmall}. 

\begin{figure}[h!] 
\begin{center}
\subfigure[]{\includegraphics[width=.495\textwidth]{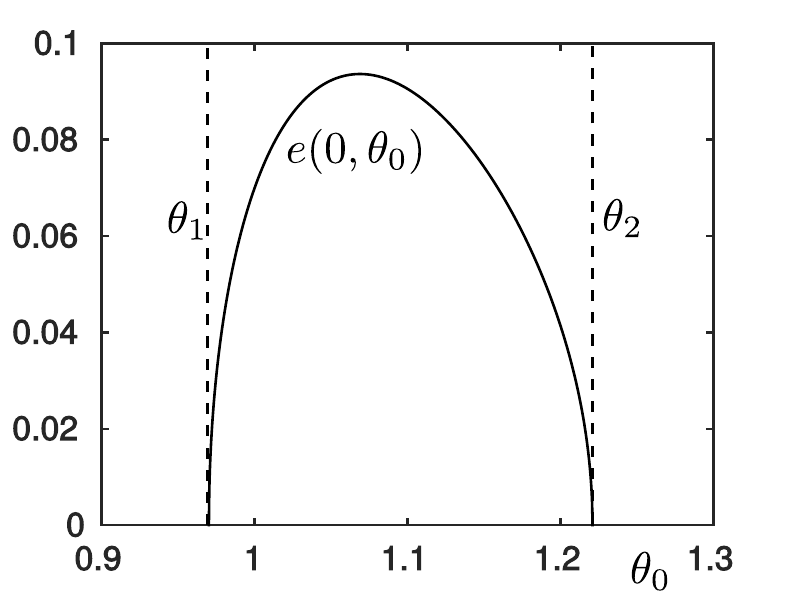}}
\subfigure[]{\includegraphics[width=.495\textwidth]{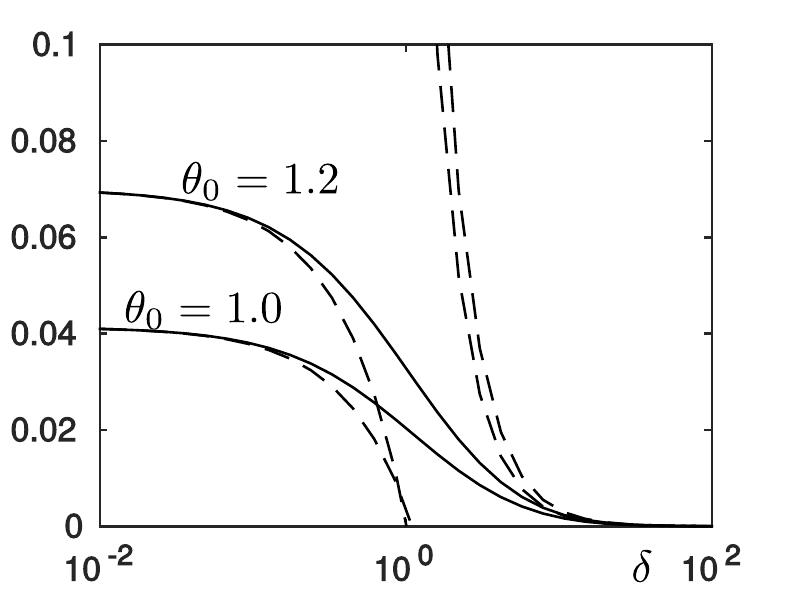}}
\end{center}
 \caption{(a) Graph of $e(0,\theta_0)$ from \eqref{eq:eDeltaZero}, where $\theta_{1,2}$ are given by \eqref{eq:thetacrit}. (b) Graph of $e(\delta,\theta_0)$ for $\theta_0=1$ and $\theta_0=1.2$, where the dashed lines correspond to the approximations obtained from \eqsref{eDeltaLarge}{eDeltaSmall}. For both figures, $\alpha=3$ and $\mu=1.4$. }
 \figlab{fig:eDelta0}
\end{figure}

In the inconsistent case, described by \thmref{thm:main}, the initial conditions \eqref{eq:ics} are very similar to those assumed by \cite{ZhaoLiuChenBrogliato2015}. As in their case, these conditions would be impossible to reach in an experiment without using some form of controller\footnote{To see this, fix any $\hat y_0<0$. Then by applying the approach in section \secref{sec:slipping} backwards in time, it follows that the backward flow of \eqref{eq:ics} for $b<0$ (dashed lines in \figref{g}(a), illustrating the $\kappa_1$-dynamics) intersects the section $\{\hat y=\hat y_0\}$ at a distance which is $o(1)$-close to $\gamma^s\cap \{\hat y=\hat y_0\}$ as $\epsilon\rightarrow 0$. Here $\gamma^s$ is the stable manifold of $C$ for $\epsilon=0$. But cf. \eqref{eq:haty}$_{\epsilon=0}$, the horizontal velocity $v$ (and hence the energy) increases unboundedly along $\gamma^s$ in backwards time. This increase occurs on the fast time scale $\tau$.}. Nevertheless, it should be possible to set up the initial conditions in \eqref{eq:ics} to approach the rigid surface from above, as it appears to have been done in \cite{ZhaoLiuMaChen2008} for the two-link manipulator system.

The indeterminate case described by \thmref{cor} is characterised by an extreme exponential splitting in phase space, due to the stable manifold of $C_1$ in the $\kappa_1$-system \eqref{eqC1}. For example, the blue orbit in \figref{g}(b) lifts off directly with $w=\mathcal O(\epsilon)$. But on the other side of the stable manifold, the green orbit undergoes IWC and then lifts off with $w=\mathcal O(1)$. 
The initial conditions in \thmref{cor} correspond to orbits that are almost grazing ($\dot y=w=\mathcal O(\epsilon),\,\ddot y=\dot w = b>0$) the compliant surface at $y=0$. 
In \figref{video} we illustrate this further by computing the full Filippov system \eqref{eq:Painleve}$_{\epsilon=10^{-3}}$ for two rods (green and blue) initially distant by an amount of $10^{-3}$ above the compliant surface ($y\approx 0.1$, see also $t=0$ in \figref{video}(a)). \figref{video}(a) shows the configuration of the rods at different times $t=0$, $t=0.25$, $t=0.5$ and $t=1$. Up until $t=0.5$, the two rods are indistinguishable. At $t=0.5$, grazing ($\dot y=w\approx 10^{-3}$) with the compliant surface $y=0$ occurs where $\theta\approx 0.9463$, $\phi\approx 1.6654$, $v\approx 1.00$ (so $b\approx 1.2500$ and $p_+\approx -2.243$). The green rod then undergoes IWC, occurring on the fast time scale $\tau$, and therefore subsequently lifts off from $y=0$ with $w=\mathcal O(1)$. In comparison the blue rod lifts off with $w\approx 10^{-3}$. At $t=1$ the two rods are clearly separated. 
\figref{video}(b) shows the projection of the numerical solution in \figref{video}(a) onto the $(w,\hat y)$-plane (compare \figref{fig:IWC}(b)). The blue orbit lifts off directly. The green orbit, being on the other side of the stable manifold of $C_{1}$, follows the unstable manifold (red) until sticking occurs. Then when $\hat F_{N}=0$ at $\hat y+\delta \hat w=0$ (dashed line), lift off occurs almost vertically in the $(w,\hat y)$-plane.  \figref{video}(c) and \figref{video}(d) show the vertical velocity $w$ and horizontal velocity $v$, respectively, for both orbits over the same time interval as \figref{video}(b); note the sharp transition for the green orbit around $t=0.5$, as it undergoes IWC. In \figref{video}(c), we include two dashed lines  $w= ev_0$ and $w=-\frac{p_+}{q_+}v_0$, corresponding to our analytical results \eqref{expr} and \eqref{eq:z2EpsPi2}, which also hold for the indeterminate case (from \thmref{cor}), in excellent agreement with the numerical results.

\begin{figure}[h!] 
\begin{center}
\subfigure[]{\includegraphics[width=.495\textwidth]{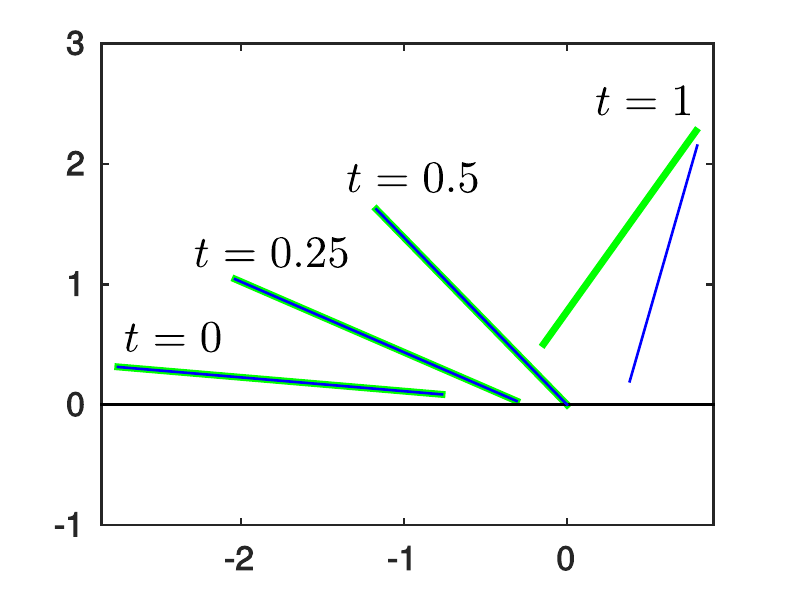}}
\subfigure[]{\includegraphics[width=.495\textwidth]{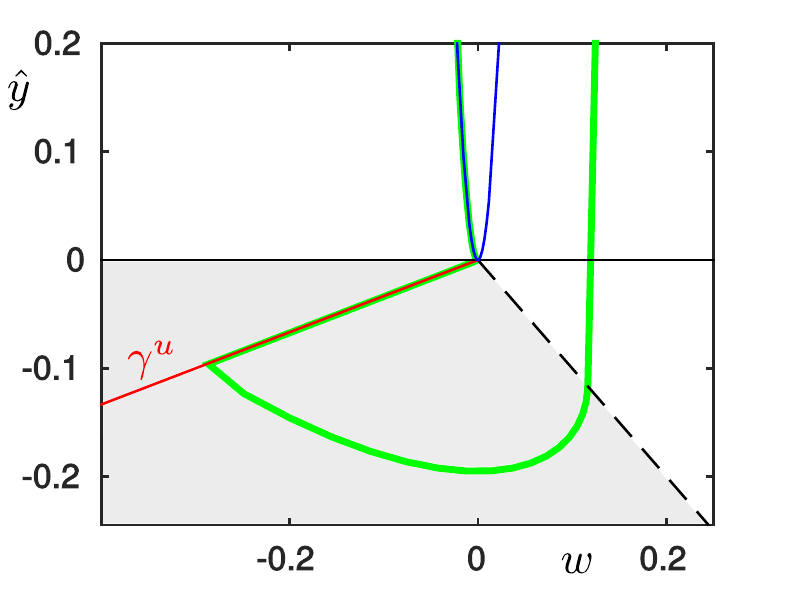}}
\subfigure[]{\includegraphics[width=.495\textwidth]{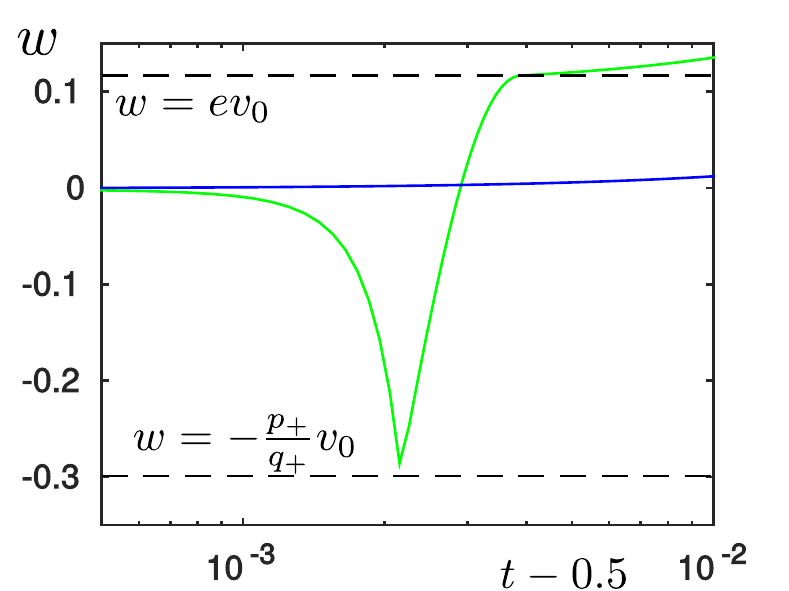}}
\subfigure[]{\includegraphics[width=.495\textwidth]{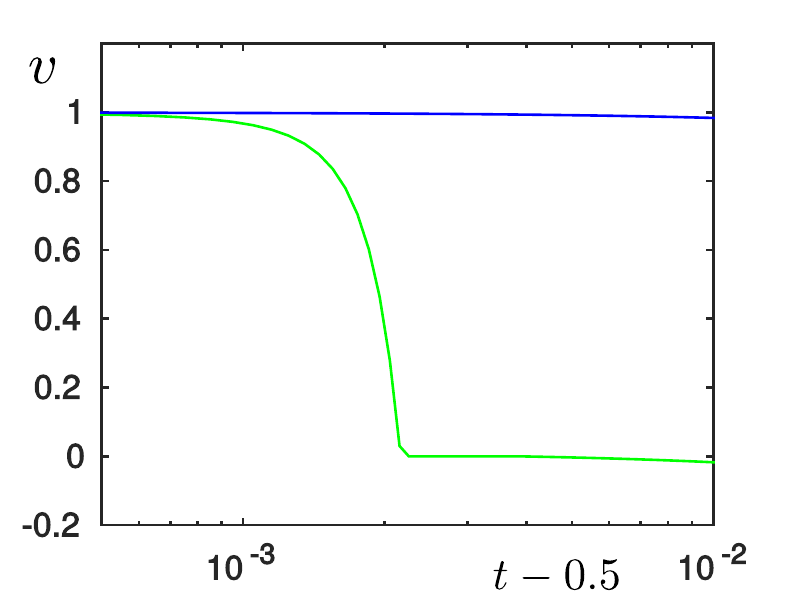}}
\end{center}
 \caption{(a) Dynamics of the Painlev\'e rod described by the Filippov system \eqref{eq:Painleve} for $\mu=\alpha=3$, $\delta=1$ and $\epsilon=10^{-3}$ in the indeterminate case. The green and blue rods are separated at $t=0$ by a distance of $10^{-3}$. At around $t= 0.5$, impact with the compliant surface occurs. The green rod experiences IWC whereas the blue rod lifts off directly. (b) Projection onto the $(w,\hat y)$-plane. (c) and (d) $w$ and $v$ as functions of time near $t=0.5$.}
 \figlab{video}
\end{figure}
 \section{Conclusions}\seclab{conclusions}
We have considered the problem of a rigid body, subject to a unilateral constraint, in the presence of Coulomb friction. Our approach was to regularize the problem by assuming a compliance with stiffness and damping at the point of contact. This leads to a slow-fast system, where the small parameter $\epsilon$ is the inverse of the square root of the stiffness.

Like other authors, we found that the fast time scale dynamics is unstable. Dupont and Yamajako \cite{DupontYamajako1997} established conditions in which these dynamics can be stabilized. In contrast, McClamroch \cite{McClamroch1989} established under what conditions the unstable fast time scale dynamics could be controlled by the slow time scale dynamics. Other authors have used the initial scaling \eqref{eq:initialscaling}, together with the scaling $\kappa_1$ to numerically compute stability boundaries \cite{DupontYamajako1997, McClamroch1989} or phase plane diagrams \cite{ChampneysVarkonyi2016}. 

The main achievement of this paper is to rigorously derive these, and other, results that have eluded others in simpler settings. For example, the work of Zhao {\it et al.} \cite {ZhaoLiuChenBrogliato2015} assumes no damping in the compliance and uses formal methods to provide estimates of the times spent in the three phases of IWC. They suggest that their analysis can  ``$\cdots$ roughly explain why the Painlev\'e paradox can result in [IWC].". In contrast, we assumed that the compliance has {\it both} stiffness and damping, analysed the problem rigorously, derived exact and asymptotic expressions for many important quantities in the problem and showed {\it exactly} how and why the Painlev\'e paradox can result in IWC.  There are no existing results comparable to \eqref{expr}, \eqsref{eDeltaLarge}{eDeltaSmall} for any value of $\delta$.

Our results are presented for arbitrary values of the compliance damping and we are able to give explicit asymptotic expressions in the limiting cases of small and large damping, all for a large class of rigid bodies, including the case of the classical Painlev\'e example in \figref{fig:rod}.

Given a general class of rigid body and a general class of normal reaction, we have been able to derive an explicit connection between the initial horizontal velocity of the body and its lift-off vertical velocity, for arbitrary values of the compliance damping, as a function of the initial orientation of the body.

%
%
%
%

\appendix

\bibliography{refs}
\bibliographystyle{plain}
\newpage

 \end{document}